\documentclass{article}
\author{}
\usepackage{todonotes}

\usepackage{amsmath, amscd, graphicx}
\usepackage{amssymb}
\usepackage{amsthm}
\usepackage{amsrefs}
\usepackage{cancel}
\usepackage[utf8]{inputenc}
\usepackage[T1]{fontenc}
\usepackage{amsmath}
\usepackage{amsfonts}
\usepackage{amssymb}
\usepackage[version=4]{mhchem}
\usepackage{stmaryrd}
\usepackage{bbold}
\usepackage{cases}
\usepackage{hyperref}
\usepackage{lipsum}

\DeclareSymbolFont{largesymbol}{OMX}{yhex}{m}{n}
\newtheorem{theo}{Theorem}[section]
\newtheorem{lemm}[theo]{Lemma}

\newtheorem{coro}[theo]{Corollary}
\newtheorem{prop}[theo]{Proposition}

\newtheorem{rema}{Remark}[section]
\numberwithin{equation}{section}

%\footnotetext{Footnotetext without footnote mark}
\begin{document}
	\title{ Ill-posedness of incompressible  Kelvin-Helmholtz problem with transverse magnetic field }
	
	\author{ Binqiang Xie $^{\dag}$, Boling Guo$^{\ddag}$ , Bin Zhao $^{\ddag}$ \\[10pt]
	\small {$^\dag$ School of Mathematics and Statistics,}\\
	\small { Guangdong University of Technology, Guangzhou,   510006, China}
	\\
	\small {$^\ddag$  Institute of Applied Physics and Computational Mathematics, Beijing, 100088,  China}
}

\footnotetext{\\ E-mail addresses: \it xbqmath@gdut.edu.cn(B.Q. Xie), \it  gbl@iapcm.ac.cn(B.L. Guo),  \it zhaobin2017math@163.com(B. Zhao).}

\date{}

\date{}
	
	\maketitle
 	\begin{abstract}
	In this paper, we prove the linear and  nonlinear  ill-posedness of the  well-known Kelvin-Helmholtz problem of the  incompressible ideal  magnetohydrodynamics (MHD) equations with transverse magnetic field. Our proof rigorously verifies  that  "the development of the Kelvin-Helmholtz instability, in the direction of the streaming, is uninfluenced by the presence of the magnetic field in the transverse direction" which was proposed by S. Chandrasekhar' book named by   Hydrodynamic and Hydromagnetic stability \cite{Chandrasekhar}. 
	\vspace*{5pt}\\
\noindent{\it {\rm Keywords}}:
Free surface; Kelvin-Helmholtz instability; transverse magnetic field.

\vspace*{5pt}
\noindent{\it {\rm 2020 Mathematics Subject Classification}}:
76W05, 35Q35, 76X05.
\end{abstract}
\section{Introduction}
\subsection{Eulerian formulation}
\quad   This paper concerns the incompressible, inviscid Kelvin-Helmholtz problem for incompressible MHD fluids in an domain $\Omega:= \mathbb{T}^{2} \times (-1,1)$. More precisely, we consider two distinct inviscid incompressible, immiscible fluids evolving in the domain $\Omega(t)$ for time $t\geq 0$. The fluids are separated from each other by a moving free surface $\Gamma(t)$, this surface divides $\Omega$ into two time-dependent, disjoint, open subsets $\Omega_{\pm}(t)$ so that $\Omega= \Omega^{+}(t) \sqcup \Omega^{-}(t)\sqcup \Sigma(t) $ and $\Gamma(t)=\bar{\Omega}^{+}(t) \cap \bar{\Omega}^{+}(t)$. Their outer boundaries $\Gamma^{+}=\{y_3=1\}$ and  $\Gamma^{-}= \{y_3=-1\}$ are fixed. The fluid occupying $\Omega^{+}(t)$ is called the upper fluid and the second fluid, which occupies  $\Omega^{-}(t)$ is called the lower fluid. The two fluids  are sufficient  smooth to satisfy the pair of  incompressible MHD equations:
\begin{equation}
\begin{cases}\label{1.1}
   \partial_t u^{\pm}+u^{\pm}\cdot \nabla u^{\pm}+\nabla p^{\pm}=B^{\pm}\cdot \nabla B^{\pm},   \\
   \partial_t B^{\pm}+u^{\pm}\cdot \nabla B^{\pm}=B^{\pm}\cdot \nabla u^{\pm},\\
   \mathrm{div}u^{\pm}=0,~~\mathrm{div}B^{\pm}=0,
\end{cases}
\end{equation}
where $u^{\pm}=(u_{1}^{\pm},u_{2}^{\pm},u_{3}^{\pm})$ is the velocity field of the two fluids, $B^{\pm}=(B_{1}^{\pm},B_{2}^{\pm},B_{3}^{\pm})$ is the magnetic field of the two fluids,  $p^{\pm}$ denotes the total pressure of the two fluids in $\Omega^{\pm}$ respectively.

We assume that the upper fluid moves initially in the horizontal direction with some constant velocity and the lower fluid moves by the same constant velocity in the opposite direction, i.e, the initial velocity field is the following form:
\begin{equation}\label{1.2}
u_{0}(y_1,y_2,y_3)=\left\{
\begin{aligned}
	&(1,0,0)&y_3\geq0,\\
	&(-1,0,0)&y_3<0,
\end{aligned}
\right.
\end{equation}
and the initial magnetic field is transverse to  initial velocity field:
\begin{equation} \label{1.3}
B_{0}(y_1,y_2,y_3)=\left\{
\begin{aligned}
	&(0,a,0)&y_3\geq0,\\
	&(0,b,0)&y_3<0,
\end{aligned}
\right.
\end{equation}
where $a,b$ are constants.

For the existence of weak solutions of \eqref{1.1} by the Rankine-Hugoniot jump relations of the hyperbolic system of equations, a standard assumption is that the pressure and the normal component of the  velocity must be continuous across the free boundary $\Gamma(t)$. We also require  the normal component of the magnetic field vanish on the free boundary $\Gamma(t)$. Therefore such piecewise smooth solution should satisfy the following 
boundary conditions on $\Gamma(t)$:
\begin{equation} \label{1.4}
p^{+}= p^{-},~~u^{+}\cdot n= u^{-}\cdot n,~~ B^{+}\cdot n=0,~~B^{-}\cdot n=0  ~~\mathrm{on}~~\Gamma(t),
\end{equation}
where $n$ is the  normal vector to $\Gamma(t)$. We suppose that the fluid is confined between two rigid plane $\Gamma^+:=\{y_3=+1\}$ and $\Gamma^-:=\{y_3=-1\}$, then we impose the boundary condition at the fixed  upper and lower boundaries $\Gamma^{\pm}= \mathbb{T}^{2} \times \{y_3=\pm 1\}$:
\begin{equation} \label{1.5}
u^{\pm}_{3}(t,y_1,y_2,\pm1)= 0, \quad B^{\pm}_{3}(t,y_1,y_2,\pm1)= 0.
\end{equation}

\subsection{Lagrangian reformulation}
\quad  Our analysis in this paper relies on the reformulation of the problem under consideration in Lagrangian coordinates. We define the fixed Lagrangian domain $\Omega^{-}= \mathbb{T}^{2} \times (-1,0)$ and $\Omega^{+}= \mathbb{T}^{2} \times (0,1)$ and assume that  there is a diffeomorphism $\eta_0:\Omega^\pm\to\Omega^\pm(0)$ such that
\begin{equation}\label{1.6}
\Gamma(0)=\eta_0(\Gamma)~\mathrm{and}~\Gamma^\pm=\eta_0(\Gamma^\pm),
\end{equation}
where $\Gamma:=\{x_3=0\}$. Define the flow map $\eta^{\pm}(t,x)\in\Omega_\pm(t)$ such that for $x\in\Omega_\pm$,
\begin{equation}\label{1.7}
\begin{cases}
  \partial_t\eta^{\pm}(t,x)=u^{\pm}(\eta(t,x),x), ~t>0\\
  \eta^{\pm}(x,0)=\eta^{\pm}_0(x),
\end{cases}
\end{equation}

We denote the Euler coordinates by $(y,t)$ with $y=\eta(x,t)$, whereas we set Lagrangian coordinates by the fixed $(x,t)\in \Omega\times \mathbb{R}^{+}$. Assume that $\eta(t,\cdot)$ is invertible and define the Lagrangian unknowns in $\Omega_\pm$ as follows:
\begin{equation}\label{1.8}
(v^{\pm}, q^{\pm},H^{\pm})(t,x):=(u^{\pm}, p^{\pm}, B^{\pm})(t,\eta(t,x)), ~i,j=1,2,3.
\end{equation}
Throughout the rest paper, an equation on $\Omega$ means that the equation holds in both $\Omega^{+}$ and $\Omega^{-}$. Since the upper and lower fluids may slip across one from another,
we introduce the slip map $\sigma: \mathbb{T}^{1}\rightarrow \mathbb{T}^{1}$:
\begin{equation*}
	(\eta^{-})^{-1} \circ \eta^{+}= (\sigma(t,x_{1},x_{2}),0).
\end{equation*}
The slip map $\sigma$ gives the particle in the lower fluid that is in contact with the particle of the upper fluid on the contact surface.

We introduce the following operator notation
\begin{equation}\label{1.9}
\mathcal{A}=(\nabla\eta)^{-1},~J=\mathrm{det}(\nabla\eta),~(\nabla_\eta)_i=\mathcal{A}^{j}_{i}\partial_j,
~\mathrm{div}_\eta=\nabla_\eta\cdot,~\Delta_\eta=\nabla_\eta\cdot\nabla_\eta.
\end{equation}
Then, utilizing the chain rule and Einstein’s summation convention for repeated indexes, \eqref{1.1} can be rewritten by the following system in Lagrangian variables in $\Omega$ 
\begin{equation}\label{1.10}
\begin{cases}
  \partial_t\eta=v, &~~\mathrm{in}~~\Omega \\
\partial_t v+\nabla_{\eta}q=H\cdot \nabla_{\eta}H, &~~\mathrm{in}~~\Omega  \\
   \partial_t H=H\cdot \nabla_{\eta} v, &~~\mathrm{in}~~\Omega\\
   \mathrm{div}_{\eta}v=0, ~\mathrm{div}_{\eta}H=0 &~~\mathrm{in}~~\Omega.
\end{cases}
\end{equation}

The jump condition in Larangian coordinates  must be rewritten in terms of the slip map $\sigma$:
\begin{equation}\label{1.11}
	\begin{cases}
		\left\langle   v^{+} - v^{-}  \circ \sigma, n\right\rangle=0, &~~\mathrm{on}~~\Gamma\\
		 p^{+} = p^{-}  \circ \sigma, &~~\mathrm{on}~~\Gamma\\
		\left\langle   H^{+}, n\right\rangle=0,\left\langle   H^{-}  \circ \sigma, n\right\rangle=0 &~~\mathrm{on}~~\Gamma.
	\end{cases}
\end{equation}
where $ n=J \mathcal{A}^{T} e_{3} $ is the out normal vector of the free surface $\Gamma(t)$ which is defined by 
	\begin{equation}\label{normal out vec}
		\begin{aligned}
			n:=\partial_{1}\eta^{+}\times\partial_{2}\eta^{+}=&(\partial_{1}\eta^{+}_2\partial_2\eta^{+}_3-\partial_{1}\eta^{+}_3\partial_2\eta^{+}_2,\\
			&\partial_{1}\eta^{+}_3\partial_2\eta^{+}_1-\partial_{1}\eta^{+}_1\partial_2\eta^{+}_3,\partial_{1}\eta^{+}_1\partial_2\eta^{+}_2-\partial_{1}\eta^{+}_2\partial_2\eta^{+}_1).
		\end{aligned} 
	\end{equation}
with $e_{3}=(0,0,1)$ is  the outward unit normal to $\Gamma$.

 Using the identity $\partial_{t} \mathcal{A}^{k}_{i}= -\mathcal{A}^{s}_{i} \partial_{s} v^{r} \mathcal{A}^{k}_{r}$ and the third equation in \eqref{1.10}, we deduce that
\begin{equation}\label{1.12}
\begin{aligned}
&\partial_t (\mathcal{A}^{k}_{i} H_{k})=\partial_t \mathcal{A}^{k}_{i} H_{k}+ \mathcal{A}^{k}_{i} \partial_t H_{k} \\
&= -\mathcal{A}^{s}_{i} \partial_{s} v^{r} \mathcal{A}^{k}_{r} H_{k}+ \mathcal{A}^{k}_{i}  H^{m}\mathcal{A}^{n}_{m} \partial_{n}v_{k}=0,
\end{aligned}
\end{equation}
and hence
\begin{equation} \label{1.13}
H=H_{0} \cdot \nabla \eta.
\end{equation}

Finally the equations \eqref{1.1} and boundary conditions \eqref{1.4}, \eqref{1.5}  can be reformulated as:
\begin{equation}\label{1.15}
\begin{cases}
  \partial_t\eta=v, &~~\mathrm{in}~~\Omega \\
  \partial^{2}_t \eta+\nabla_{\eta}q=(H_{0} \cdot \nabla \eta) \cdot \nabla_{\eta}(H_{0} \cdot \nabla \eta), &~~\mathrm{in}~~\Omega  \\
   \mathrm{div}_{\eta}v=0, ~\mathrm{div}_{\eta}H=0 &~~\mathrm{in}~~\Omega,\\
    \left\langle   v^{+} - v^{-}  \circ \sigma, n\right\rangle=0, &~~\mathrm{on}~~\Gamma\\
    p^{+} = p^{-}  \circ \sigma, &~~\mathrm{on}~~\Gamma\\
    \left\langle   H^{+}, n\right\rangle=0,\left\langle   H^{-}  \circ \sigma, n\right\rangle=0 &~~\mathrm{on}~~\Gamma\\
    v_{3}=0,\quad H_{3}=0,&~~\mathrm{on}~~\Gamma^{\pm}\\
    (\eta, v, H)|_{t=0}=  (\eta_{0}, v_{0}, H_{0}).
\end{cases}
\end{equation}
\subsection{History result}
\par \quad The stability problems of an interface between two fluids in a relative motion have attracted a wide interest of researchers of various fields. This type of instability is well known as the Kelvin-Helmholtz instability which was first studied
by Hermann von Helmholtz in 1868 \cite{Helmholtz} and by William Thomson (Lord Kelvin) in
1871 \cite{Kelvin}. The Kelvin-Helmholtz (K-H) instability is important in understanding a variety of space and astrophysical phenomena involving sheared plasma flow such as the stability of the
interface between the solar wind and the magnetosphere (\cite{Dungey},\cite{Parker}), interaction between adjacent streams of different velocities in the solar wind \cite{Sturrock}  and the dynamic structure of cometary tails \cite{Ershkovich}.

Early investigations of the Kelvin-Helmholtz instability
were concerned with the magnetohydrodynamic (MHD)
stability of the tangential velocity discontinuity (velocity
shear layer of zero thickness) in an incompressible plasma.
The linear stability results are given by Chandrasekhar
\cite{Chandrasekhar}, for example. The results for two simple cases may be
noted. If the plasma flow is perpendicular to the magnetic
field (transverse case), the magnetic field has no effect on the
instability and the flow is unstable for all velocity jumps, just
as in fluid dynamics.  In our paper we shall prove this nonlinear case exhibit the Chandrasekhar' linear counterparts. If the plasma flow is parallel to the
magnetic field (parallel case), then the mode is completely
stabilized unless the total velocity jump exceeds twice the
Alfven speed.
This Kelvin-Helmholtz instability configuration is also  known in literature as the ‘vortex sheet’, as its vorticity distribution is described by a $\delta$-function supported by a discontinuity in the velocity field at the sheet location.

In the slightly refined version of
Axford \cite{Axford} and Sen \cite{Sen}, considering different magnitudes and directions of the
magnetic field on the two sides of the vortex sheet, but still assuming a discontinuity
between the two incompressible plasmas, the square of the growth rate reads
\begin{equation}\label{Sen}
	\gamma^{2}= \frac{n_{1} n_2}{(n_1+n_2)^2} [k \cdot (U_2-U_1)]^2- \frac{1}{4\pi (n_1+n_2) m_i} [(k\cdot B_1)^{2}+(k\cdot B_2)^{2}],
\end{equation}
where $m_i$ is the ion mass, $k=(0,k_y,k_z)$ is the wave vector,$ n_i$, $U_i$ and $B_i$ are
the equilibrium number density, velocity and magnetic field, on the left ($i=1$) and
on the right ($i=2$) of the vortex sheet. This equation shows how the driver for
the instability is provided by the velocity jump along the wave vector direction
and how the magnetic tension, generated by the distortion of the magnetic field
component parallel to k, stabilizes the mode. In the  case of the stabilizing effect of magnetic field over the destabilizing effect of velocity shear, this condition \eqref{Sen} simplifies to the well-known Syrovaskij condition:
\begin{equation}
|[u]|^{2}\leq 2(|h^{+}|^{2}+ |h^{-}|^{2})
\end{equation}
and
\begin{equation}
	|[u]\times h^{+}|^{2}+ |[u]\times h^{+}|^{2} \leq 2(|h^{+}\times h^{-}|^{2})
\end{equation}
Under the Syrovatskij stability condition, Morando, Trakhinin, and Trebeschi \cite{Morando} proved an a priori estimate with a loss of three derivatives for the linearized system. Trakhinin \cite{Trakhinin} proved an a priori estimate without loss of derivative from data for the linearized system with variable coefficients under a strong stability condition
\begin{equation}
	\max(|[u]\times h^{+}|, |[u]\times h^{-}|) \leq |h^{+}\times h^{-}|.
\end{equation}
In a recent work \cite{Coulombel}, Coulombel, Morando, Secchi, and Trebeschi proved an
a priori estimate without loss of derivatives for the nonlinear current-vortex sheet
problem under the strong stability condition (1.11). Nonlinear stability of the incompressible current-vortex sheet problem was proved by Sun-Wang-Zhang under Syrovatskij stability condition \cite{Sun}.  

On the other hand,  in the  case of  the destabilizing effect of velocity shear over the stabilizing effect of magnetic field, we prove  linear and nonlinear ill-posedness of the  Kelvin-Helmholtz problem for incompressible MHD fluids \cite{Xie} under the condition violating the Syrovatskij stability condition in the framwork of Ebin' work \cite{E2}. In the pages 511-512 of S. Chandrasekhar's book, he studied the effect of a magnetic field transverse to the direction of streaming and found the the development of the Kelvin-Helmholtz instability, in the direction of the streaming, is uninfluenced by the presence of the magnetic field in the transverse direction development in the linear case. Now we are going to prove the  nonlinear MHD Kelvin-Helmholtz problem exhibit the same ill-posedness as his linearized counterparts \cite{Chandrasekhar} .

\subsection{Main result}
\par \quad \quad  This paper is devoted to proving Kelvin-Helmholtz instability of the system under the destabilizing effect of velocity shear with traversal magnetic field.
\begin{theo}\label{theorem}
Suppose that the initial discontinuous velocity field and  the initial discontinuous magnetic field  satisfies  \eqref{2.2}, \eqref{2.3}. Let the initial domain to be $\Omega_{0}= \mathbb{T}^{2} \times (-1,0)\cup(0,1)$.  Then the Kelvin-Helmholtz problem of \eqref{1.15}  is  linear and nonlinear ill-posedness: for any small $\delta>0$, there exists a family of classical solutions $(\eta_{n}(t), \partial_{t} \eta_{n}(t))$ to \eqref{1.15}  such that 
\begin{equation}
\| 	\eta_{n}(0)-\xi(0) \|_{C^{\infty}(\Omega)}+  \| \partial_{t} \eta_{n}(0)-\partial_{t}\xi(0) \|_{C^{\infty}(\Omega)}\leq  \delta,
\end{equation}
but for $t>0$, we have 
\begin{equation}
	\| 	\eta_{n}(t)-\xi(t) \|_{H^{2}(\Omega)}+  \| \partial_{t} \eta_{n}(t)-\partial_{t}\xi(t) \|_{H^{2}(\Omega)} \rightarrow  \infty,
\end{equation}
as $n \rightarrow \infty$.
\end{theo}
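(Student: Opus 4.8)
The plan is to follow the Hadamard-type ill-posedness scheme of \cite{Xie} within the geometric framework of \cite{E2}, while exploiting the one algebraic feature that makes the transverse configuration special: for perturbations propagating in the streaming direction, the magnetic tension term drops out identically, so the dynamics reduces to the purely hydrodynamic Kelvin--Helmholtz problem. First I would fix the reference solution $\xi$ to be the steady shear flow written in Lagrangian variables. Taking $\eta_0=\mathrm{id}$, the data \eqref{1.2}--\eqref{1.3} generate
\[
\xi^{\pm}(t,x)=(x_1\pm t,\,x_2,\,x_3),\qquad \partial_t\xi^{\pm}=(\pm1,0,0),
\]
with constant pressure and magnetic field frozen at $H^{+}=(0,a,0)$, $H^{-}=(0,b,0)$. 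One checks directly that $(\xi,\partial_t\xi)$ solves \eqref{1.15}: the flat interface $\{x_3=0\}$ is preserved, $\nabla\xi=I$ so $H=H_0\cdot\nabla\xi=H_0$ by \eqref{1.13}, and $\langle H^{\pm},n\rangle=0$ holds since $n=e_3$ while $H^{\pm}$ is horizontal and transverse.

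Next I would linearize \eqref{1.15} about $\xi$, setting $\eta=\xi+\phi$ and discarding terms quadratic in $\phi$. Since $\nabla\xi=I$, we have $H=H_0+H_0\cdot\nabla\phi$ and $\nabla_\eta=\nabla+O(\nabla\phi)$, so the leading magnetic force is $(H_0\cdot\nabla)^2\phi$, which in $\Omega^{\pm}$ equals $a^2\partial_2^2\phi$, $b^2\partial_2^2\phi$. I would then seek normal modes whose wave vector is purely in the streaming direction, $\phi\sim e^{\mathrm{i} nx_1}\,\psi(x_3)\,e^{\gamma t}$, so that the mode is independent of $x_2$. The magnetic tension is proportional to $(H_0\cdot k)^2=a^2k_2^2$, which vanishes for $k=(n,0,0)$. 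Hence the linearized momentum balance collapses to the classical vortex-sheet system, and solving the resulting eigenvalue ODE in $x_3$ (with the matching conditions from \eqref{1.11} across $\{x_3=0\}$ and the rigid-wall conditions $v_3=0$ at $x_3=\pm1$) yields a real growing eigenvalue with $\gamma=\gamma(n)\sim cn$, $c>0$, as $n\to\infty$; the finite-strip factor $\tanh n\to1$ recovers the infinite-domain rate $\gamma\sim n$. This is exactly the rigorous form of Chandrasekhar's assertion.

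With the growing modes in hand I would build the destabilizing family $\eta_n(0)=\xi(0)+\varepsilon_n\,\Phi_n$, where $\Phi_n$ is the $L^2$-normalized real part of the mode at frequency $n$ and $\varepsilon_n=e^{-\sqrt{n}}$. Because every spatial derivative of $\Phi_n$ is bounded by a power of $n$, the super-polynomial decay of $\varepsilon_n$ gives $\varepsilon_n n^{k}\to0$ for each fixed $k$, so the $C^{\infty}$ distance of the initial data to $\xi(0)$ (and of the velocities) is below $\delta$ for $n$ large. On the other hand, the mode grows like $e^{\gamma(n)t}$, so for any fixed $t>0$ the $H^{2}$ norm of $\eta_n(t)-\xi(t)$ is of order $\varepsilon_n n^{2}e^{cnt}=e^{-\sqrt{n}}n^{2}e^{cnt}\to\infty$, which is the linear ill-posedness.

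The main obstacle is the nonlinear upgrade: producing genuine solutions of \eqref{1.15}, not of its linearization, for which the exponential growth of the seed survives the nonlinear interaction on a fixed time interval. Following \cite{E2,Xie} I would run a nonlinear iteration with energy/Gronwall estimates on an interval $[0,T_n]$ adapted to frequency $n$, showing the remainder stays quadratically small relative to the dominant mode. Because the transverse field contributes nothing to the leading-order dynamics, these estimates are structurally identical to the hydrodynamic case already treated, with the magnetic terms entering only as lower-order perturbations controlled through \eqref{1.13}. The delicate technical core will be checking that the slip map $\sigma$ and the free-boundary matching \eqref{1.11} do not reintroduce any stabilizing mechanism, and that the error bounds are uniform enough in $n$ for the divergence to persist as $n\to\infty$.
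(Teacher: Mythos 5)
Your linear analysis is essentially the paper's own: the same reference solution $\xi$, the same observation that the magnetic tension vanishes on modes whose wave vector points in the streaming direction, the same growth rate $\gamma(k)\sim k$ (the paper derives the boundary equation $\partial_t^2u_3+\partial_{y_1}^2u_3=\tfrac{a^2+b^2}{2}\partial_{y_2}^2u_3$ on $\Gamma(t)$ and takes solutions $\exp(kt+iky_1)$ independent of $y_2$), and even the same scaling $e^{-\sqrt n}$ for the size of the seed. So the first half of your proposal is fine.

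The genuine gap is the nonlinear step. You propose to \emph{construct} genuine solutions of \eqref{1.15} by "a nonlinear iteration with energy/Gronwall estimates on $[0,T_n]$," showing the remainder stays quadratically small. This route is blocked by the very phenomenon you are trying to exhibit: an iteration or Gronwall argument needs the linearized solution operator to be bounded on the working space with constants uniform along the iteration, but your own linear analysis shows this operator amplifies frequency $k$ like $e^{kt}$, i.e.\ it is unbounded on every Sobolev space (loss of arbitrarily many derivatives). There is no Sobolev existence theory for this vortex-sheet problem — the existence of the asserted family of solutions is precisely what is in question — so a direct construction would require analytic data or some other structure you have not supplied. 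The paper sidesteps this entirely with a contradiction argument: \emph{assume} Hadamard well-posedness; then for the data \eqref{3.32} the solutions $\eta_n$ exist on a uniform time interval and $\chi_n=\eta_n-\xi$ satisfies $\|(\chi_n,\partial_t\chi_n)\|_3\to0$. That assumed smallness is what tames the nonlinearity: writing the equation for $\chi_n$ as the linearization at $\xi$ plus a quadratic remainder $Q$ given by a double integral of $D^2Z$ as in \eqref{double integration}, Lemma \ref{lem:nonlinear} gives $\|Q\|_1\le\epsilon_n\|(\chi_n,\partial_t\chi_n)\|_1$ with $\epsilon_n\to0$. Then the perturbation is split as in \eqref{3.14} into $\nabla\mathcal P+\nabla\mathcal L+\nabla g+r$ (high- and low-frequency odd harmonic parts, even harmonic part, remainder), energy functionals $E_1^{\pm},G,F$ are introduced, and an invariant-set argument plus its corollary yield $\|\nabla\mathcal P(t)\|_2\gtrsim e^{-\sqrt n}e^{nt}\to\infty$, contradicting the convergence that well-posedness forced. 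Your proposal contains none of these ingredients — the contradiction structure, the Taylor expansion with the $D^2Z$ estimate, the four-part decomposition with its invariant energies — and without them (or a substitute for the missing existence theory) the plan stalls at its first step.

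A smaller inaccuracy: the magnetic terms are not "lower-order perturbations." In the linearized operator they appear as the second-order terms $a^2\partial_{x_2}^2$ and $b^2\partial_{x_2}^2$, the same order as the pressure contribution; they are harmless only because the growing modes are chosen independent of $x_2$, so these terms annihilate them, which is exactly Chandrasekhar's point that the paper makes rigorous.
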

\begin{rema}
In the linear case, the growth of the solution of the linear system is like $e^{kt}$ as time evolves, here $k$ is the tangential frequency. Thus, the solution is very unstable for the high frequency $k$. On the other hand, we show the nonlinear ill-posedness in Hadamard sense.
\end{rema}
	
\begin{rema}
This theorem expresses strong linear  and nonlinear ill-posedness of the Kelvin-Helmholtz problem for the  MHD fluids in the Sobolev framework. It is a consequence of an instability process, which holds at
high tangential frequencies. We show that some perturbations with high  tangential
frequency grow in the linear regime like  $e^{kt}$, we also prove that  it is very unlikely that this destabilization phenomenon can be cancelled by nonlinear interactions.
	\end{rema}

\subsection{Main idea}
\quad  \quad  In fluid dynamics, tangential discontinuities are always unstable with respect to infinitesimal perturbations, and so are rapidly broadened into turbulent regions. A magnetic field, however, has a stabilizing effect on the motion of a conducting fluid and in fact that a perturbation involving fluid displacement transverse to the field leads to a stretching of the lines of magnetic force frozen in it and therefore to the appearance of the force which tend to restore the unperturbed flow. However, in this paper we showed that a weak magnetic field are not able to restore the unperturbed flow, thus it will induce the famous Kelvin-Helmholtz instability. Our analysis relies on the reformulation of the problem both in Euler and Lagrangian coordinates.

The first part of the paper is devoted to a study of the equations obtained by linearizing the incompressible MHD equations, written in Lagrangian coordinates, around the steady-state solution.  A key idea is to get the evolution equation of the velocity, to begin with, the resulting linearized equations are
\begin{equation}\label{1.18}
\partial^{2}_t{v}^{+}+\nabla \partial_{s} q^{+}=  a^{2} \partial^{2}_{x_2} v^{+}  \quad \text{on}\quad\Omega^{+}.
\end{equation}
so we can find an equation for $v^{+}$ by computing $\partial_{s} q^{+}$. As $\partial_{s} q^{+}$ satisfy the elliptic equation with some boundary condition, we can find a formula for $\partial_{s} q^{+}$ on the boundary $\Gamma$. Since we are interested in Kelvin-Helmholtz instability, the instability behavior happen on the boundary. For this, we need restrict  linearized equation \eqref{1.18} to $\Gamma$, i. e., we let $x_3=0$. Then we substitute this formula for $\partial_s q^{ \pm}$ into linearized equation \eqref{1.18} on the boundary $\Gamma$, we obtain
\begin{equation}\label{1.19}
\partial^{2}_{t} v^{+}_{3}=\partial_{x_{1}}  \partial_{t}{v}^{+}_3
        +\partial_{x_{1}}  \partial_{t} v^{-}_3 \circ \sigma+\frac{1}{2} ( a^{2} \partial^{2}_{x_2}v_3^{+}+b^{2} \partial^{2}_{x_2}v_3^{-}\circ \sigma) \quad \text { on } \quad \Gamma.
\end{equation} 
To reflect the characteristic of  velocity jumping on the boundary (tangential discontinuities), we  revert the  equation \eqref{1.19} to Euler coordinates and  obtain an evolution equation for the velocity:
\begin{equation*}
\partial^{2}_{t} u^{+}_{3}+\partial_{y_{1}}^2 u^{+}_3=\frac{a^2+b^2}{2} \partial^{2}_{y_2} u^{+}_3   \quad \text { on } \quad \Gamma(t).
\end{equation*}  
Similarly, we also have
\begin{equation*}
\partial^{2}_{t} u^{-}_{3}+\partial_{y_{1}}^2 u^{-}_3=\frac{a^2+b^2}{2} \partial^{2}_{y_2} u^{-}_3   \quad \text { on } \quad \Gamma(t).
\end{equation*}  
Adding above two equations induces
\begin{equation}\label{2.3111}
\partial^{2}_{t} u_{3}+\partial_{y_{1}}^2 u_3=\frac{a^{2} +b^{2}}{2}\partial_{y_{2}}^2 u_3 \quad \text { on } \quad \Gamma(t).
\end{equation}  
Therefore  we can find  exponential growing  the solutions of \eqref{2.3111} which are linear combinations of the real and imaginary parts of $\exp(  k t+i k y_{1}), k$ is a positive integer. Here we note that the solutions of \eqref{2.3111} are indepedent of the variable $y_{2}$ on the boundary $\Gamma(t)$. It is shown that the solutions of \eqref{2.3111} are qualitatively more unstable for large frequencies $k$ in the direction of the streaming ($y_{1}$-direction). Therefore we can see that the development of the kelvin-Hellmholtz  instability, in the direction of the streaming ($y_{1}$-direction), is influenced by the presence of the magnetic field in the transverse direction  ($y_{2}$-direction).

With linear ill-posedness established, we then prove ill-posedness of the nonlinear problem. To see this ill-posedness, by the method of Ebin's work \cite{E2}, we estimate the nonlinear term and find that the nonlinear term is second order in  solution.  In order to estimate the perturb quantity $z_{n}= \eta_{n}- \xi$, we decompose it into four part, for each part, we can construct the  functional and prove the  functional is expotential growing, therefore the norm of  the perturb quantity tend to infinity, we can see that nonlinear problem is not well-posedness.

\section{The Linearized Equations in both Lagrangian coordinates and Euler coordinates}
\quad In this section we consider  a  linearized system and construct  a growing solution. We construct such solution by using both  Lagrangian coordinates and Euler coordinates.

To begin,  we  choose the initial flow map to be the identity mapping:
	\begin{equation} \label{2.1}
		\eta_0:=\eta(0)=Id,
	\end{equation}
therefore the initial data in Euler coordinates is also an initial data in Lagrangian coordinates, i.e. $v(0)$ and $H(0)$ satisfy
	\begin{equation} \label{2.2}
		v(0)=u_0(\eta_0)=\left\{
		\begin{aligned}
			&(1,0,0)&x_3>0,\\
			&(-1,0,0)&x_3<0,
		\end{aligned}
		\right.
	\end{equation}
	and
	\begin{equation} \label{2.3}
		H (0)=B_0(\eta_0)=\left\{
		\begin{aligned}
			&(0,a,0)&x_3>0,\\
			&(0,b,0)&x_3<0.
		\end{aligned}
		\right.
	\end{equation}
Now we want to find a special solution to the system  \eqref{1.15} with these data \eqref{2.1}\eqref{2.2}\eqref{2.3}, 
to  compute $p$, applying the Lagrangian divergence $div_{\eta}= \nabla_{\eta}\cdot$ to the second equation of \eqref{1.15}, Noting  the following fact, the incompressibility condition implies that $J(\eta(t,s,y))=1$,  differentiating with respect to t, we have
	$$div_{\eta_{\pm}}(\partial_{t}{\eta}^{\pm})=0.$$
Differentiating with respect to t in the above equation, we get
$$div_{\eta^{\pm}}(\partial^{2}_{t}{\eta}^{\pm})-tr(\nabla_{\eta^{\pm}}\partial_{t}{\eta}^{\pm})^2=0$$
 we get an  elliptic equation:
	\begin{equation}\label{2.4}
		\Delta_{\eta} q= tr((\nabla_{\eta}(H_{0}\cdot \nabla \eta))^2)-tr((\nabla_{\eta} \dot{\eta})^2)  ~in~\Omega.
	\end{equation}
For such initial data $\eta_{0}$, $v_{0}$ and $H_{0}$, we have $\Delta q=0$, so $q=0$ is the desired solution. Thus the system  \eqref{1.15} become $\partial^{2}_{t} \eta = 0$, and
	\begin{equation} \label{2.5}
		\eta(t,x_{1},x_2,x_3)=\left\{
		\begin{aligned}
			&(x_1+t,x_2,x_3)&x_3>0,\\
			&(x_1-t,x_2,x_3)&x_3<0,
		\end{aligned}
		\right.
	\end{equation}
	is the solution. We denote this solution as $\xi$. Now we consider a family smooth solutions  $\eta(t,s)$ to the system \eqref{1.15} and assume that $\eta(t,s=0)=\xi(t)$. We define $v(t)=\left.\partial_{s} \eta(t, s)\right|_{s=0}$,  $u(t)=v(t) \circ \eta(t)^{-1}$, and $q(t, s)=$ $p(t, s) \circ \eta(t, s)$. 
	Applying $\partial_{s}$ to the second  equation in \eqref{1.15}  and setting $s$ equal to zero, we get:
	\begin{equation} \label{2.6}
		\begin{aligned}
			\partial^{2}_t v&-\nabla_{\xi} v\cdot  \nabla_{\xi} q+\nabla_{\xi} \partial_{s} q=(H_{0} \cdot \nabla v) \cdot \nabla_{\xi}(H_{0} \cdot \nabla \xi)\\
			&-(H_{0} \cdot \nabla \xi) \cdot \nabla_{\xi} v\cdot  \nabla_{\xi}(H_{0} \cdot \nabla \xi)
			+ (H_{0} \cdot \nabla \xi) \cdot \nabla_{\xi}(H_{0} \cdot \nabla v)\quad \text{on}\quad\Omega.
		\end{aligned}
	\end{equation}
Since $\xi$ is just a horizontal translation by $t$, thus we have $\nabla_{\xi}= \nabla$.
We linearized this system around this $\xi$ to give
\begin{equation} \label{2.7}
-\nabla_{\xi} v\cdot  \nabla_{\xi} q= - \nabla (v_1,v_2)\cdot \nabla 0 =0\quad \text{on}\quad\Omega.
\end{equation}
\begin{equation} \label{2.8}
\begin{aligned}
&(H_{0} \cdot \nabla v) \cdot \nabla_{\xi}(H_{0} \cdot \nabla \xi)\\
=&\left\{
		\begin{aligned}
			&(a\partial_{x_2}(v^{+}_{1},v^{+}_{2},v^{+}_{3})) \cdot \nabla (a\partial_{x_2}(x_1+t,x_2,x_3))=0\quad &\text{on}\quad\Omega^{+},\\
			&(b\partial_{x_2}(v^{-}_{1},v^{-}_{2},v^{-}_{3})) \cdot \nabla (b\partial_{x_2}(x_1-t,x_2,x_3))=0\quad &\text{on}\quad\Omega^{-},
		\end{aligned}
		\right.
  \end{aligned}
\end{equation}
\begin{equation} \label{2.9}
\begin{aligned}
&(H_{0} \cdot \nabla \xi) \cdot \nabla_{\xi^{+}} v^{+}\cdot  \nabla_{\xi^{+}}(H_{0} \cdot \nabla \xi)\\
=&\left\{
		\begin{aligned}
			&(a\partial_{x_2}(x_1+t,x_2,x_3)) \cdot \nabla (v^{+}_1,v^{+}_2,v^{+}_{3}) \cdot \nabla (a\partial_{x_2} (x_1+t,x_2,x_3))=0& \text{on}\quad\Omega^{+},\\
			&(b\partial_{x_2}((x_1-t,x_2,x_3)) \cdot \nabla (v^{-}_1,v^{-}_2,v^{-}_{3}) \cdot \nabla (b\partial_{x_2} ((x_1-t,x_2,x_3))=0& \text{on}\quad\Omega^{-},
		\end{aligned}
		\right.
\end{aligned}
\end{equation}
and
\begin{equation} \label{2.10}
\begin{aligned}
&(H_{0} \cdot \nabla \xi) \cdot \nabla_{\xi}(H_{0} \cdot \nabla v)\\
=&\left\{
		\begin{aligned}
  &(a\partial_{x_2}(x_1+t,x_2,x_3)) \cdot \nabla (a\partial_{x_2} (v^{+}_1,v^{+}_2,v^{+}_{3})) =a^{2} \partial^{2}_{x_2} v^{+} \quad&\text{on}\quad\Omega^{+},\\
  &(a\partial_{x_2}(x_1-t,x_2,x_3)) \cdot \nabla (b\partial_{x_2} (v^{-}_1,v^{-}_2,v^{-}_{3})) =b^{2} \partial^{2}_{x_2} v^{-}\quad &\text{on}\quad\Omega^{-}.
\end{aligned}
		\right.
\end{aligned}
\end{equation}
Therefore, substituting \eqref{2.7}-\eqref{2.10} in \eqref{2.6} gives the following  linearized equations:
\begin{equation} \label{2.11}
	\partial^{2}_t{v}^{+}+\nabla \partial_{s} q^{+}=  a^{2} \partial^{2}_{x_2} v^{+}  \quad \text{on}\quad\Omega^{+}.
\end{equation}
	and
\begin{equation} \label{2.12}
	\partial^{2}_t{v}^{-}+\nabla\partial_{s} q^{-}=  b^{2} \partial^{2}_{x_2} v^{-} \quad \text{on}\quad\Omega^{-}.
\end{equation}
From this, we can see that in order to obtain an linearized equations for $v$, the remaining thing is to compute $\partial_{s} q$. Before the linearization, we know $q$ satisfy the elliptic equation \eqref{2.4}. As for the behavior at the boundary, we know that 
	\begin{equation}\label{2.13}
		q^{+}=q^{-} \circ \sigma  \quad \text {on } \quad \Gamma
	\end{equation}
It is also need to add another two conditions involving the normal derivatives of $q$, on the boundary $\Gamma^{\pm}$, since $\eta^{\pm}_{3}=\pm1, $ it implies $\partial_{t}^2\eta^{\pm}_{3}(t,y_1,\pm1)=0$, where $\eta_{3}$ is the third  component of the vector $\eta=(\eta_1,\eta_{2},\eta_{3})$, we compute
 \begin{equation}\label{2.14}
		\left\langle\nabla_{\eta} q,(0,0,1)\right\rangle= 	\left\langle(H_{0} \cdot \nabla \eta) \cdot \nabla_{\eta}(H_{0} \cdot \nabla \eta),(0,0,1)\right\rangle \text { on } \Gamma^{\pm}.
	\end{equation}
On the boundary $\Gamma$, we take the difference of the “+” and “-” momentum  equations in \eqref{1.15} to obtain the following boundary condition
	\begin{equation}\label{2.15}
		\begin{aligned}
			&\left\langle\nabla_{\eta^{+}} q^{+}-\nabla_{\eta^{-}} q^{-} \circ \sigma, n\right\rangle  =-\left\langle\partial^{2}_{t}{\eta}_{+}-\partial^{2}_{t}{\eta}_{-} \circ \sigma, n\right\rangle\\
			&+\left\langle(H_{0} \cdot \nabla \eta^{+}) \cdot \nabla_{\eta^{+}}(H_{0} \cdot \nabla \eta^{+})-  (H_{0} \cdot \nabla \eta^{-}) \cdot \nabla_{\eta^{-}}(H_{0} \cdot \nabla \eta^{-}), n \right\rangle\\
			&=: M \quad \text {on } \quad \Gamma
		\end{aligned}
	\end{equation}
Collecting the elliptic equation \eqref{2.4} and the boundary conditions \eqref{2.13}-\eqref{2.15} gives the coupled system for  the pressure 
\begin{equation}
\begin{cases}\label{2.16}
   \Delta_{\eta} q=  tr((\nabla_{\eta}(H_{0}\cdot \nabla \eta))^2)-tr((\nabla_{\eta} \dot{\eta})^2)  & \text{in} \quad\Omega^{\pm}, \\
   \left\langle\nabla_{\eta} q,(0,1)\right\rangle= 	\left\langle(H_{0} \cdot \nabla \eta) \cdot \nabla_{\eta}(H_{0} \cdot \nabla \eta),(0,0,1)\right\rangle &\text {on}\quad  \Gamma^{\pm},\\
   q^{+}=q^{-} \circ \sigma  \quad  &\text {on } \quad \Gamma, \\
   \left\langle\nabla_{\eta^{+}} q^{+}-\nabla_{\eta^{-}} q^{-} \circ \sigma, n^{+} \right\rangle=M,
    &\text {on } \quad \Gamma.
\end{cases}
\end{equation}  
  
  Applying $\left.\partial_{s}\right|_{s=0}$ to above coupled system and using the fact that $q=0$ and  $(\sigma, 0)=\xi_{-}(t)^{-1} \circ \xi_{+}(t)(x_{1},x_{2}, 0)=(x_{1}+2 t,x_{2}, 0)$ when $s=0$, we get:
\begin{equation}
\begin{cases}\label{2.17}
  \Delta \partial_{s} q= 0  & \text{in} \quad\Omega^{\pm}, \\
   \partial_{x_{3}}  \partial_{s} q=0  &\text {on}\quad  \Gamma^{\pm},\\
  \partial_{s}  q^{+}(t, x_{1},x_{2},0)= \partial_{s}  q^{-}(t, x_{1}+2t,x_{2},0) \quad  &\text {on } \quad \Gamma, \\
   \partial_{x_{3}}  \partial_{s}  q^{+}-\partial_{x_{3}}  \partial_{s} q^{-} =\partial_{s} M \quad &\text {on } \quad \Gamma.
\end{cases}
\end{equation}  

To solve \eqref{2.17} we must compute $\partial_s M$. Before computation, we need rewrite $M$ by using the boundary condition  $[v]\cdot n=0$, $n=\partial_{x_{1}}\eta^{+}\times \partial_{x_{2}}\eta^{+}$ on $\Gamma$, i.e.
\begin{equation}\label{2.18}
		\left\langle   \partial_{t}\eta^{+} - \partial_{t}\eta^{-}  \circ \sigma, n \right\rangle=0\quad \text {on } \quad \Gamma.
\end{equation}
	Differentiating \eqref{2.18} in $t$, and re-arranging terms, we get:
 \begin{equation}\label{2.19}
	\begin{aligned}
		\left\langle   \partial^{2}_{t}\eta^{+} - \partial^{2}_{t}\eta^{-}  \circ \sigma, n\right\rangle=&\left\langle 2 \partial_{x_1} \partial_t{\eta}^{-} \circ \sigma, n\right\rangle \\
		&-\left\langle   \partial_{t}\eta^{+}- \partial_{t}\eta^{-}  \circ \sigma,   \partial_{t} n\right\rangle\quad \text {on } \quad \Gamma.
	\end{aligned}
\end{equation}
	Therefore the form of $M$ can also  be written in another form:
	\begin{equation}\label{2.20}
	\begin{aligned}
		M =&\left\langle(H_{0} \cdot \nabla \eta^{+}) \cdot \nabla_{\eta^{+}}(H_{0} \cdot \nabla \eta^{+})- [(H_{0} \cdot \nabla \eta^{-}) \cdot \nabla_{\eta^{-}}(H_{0} \cdot \nabla \eta^{-})] \circ \sigma , n \right\rangle \\
&+\left\langle   \partial_{t}\eta^{+}- \partial_{t}\eta^{-}  \circ \sigma, n\right\rangle-\left\langle 2\partial_{x_1} \partial_t{\eta}^{-} \circ \sigma, n\right\rangle \quad \text {on } \quad \Gamma.
	\end{aligned}
\end{equation}
	
Applying $\left.\partial_s\right|_{s=0}$ to \eqref{2.20}, and using $\partial_{t}\eta^{\pm}(t, s=0)=\partial_{t}\xi^{ \pm}(t)= \pm(1,0,0)$ and $\left.\partial_t \sigma\right|_{s=0}=2$. Noting the following fact,
\begin{equation}\label{2.21}
	\begin{aligned}
n|_{s=0}=&(\partial_{x_1}\xi^{+}_2\partial_{x_2}\xi^{+}_3-\partial_{x_1}\xi^{+}_3\partial_{x_2}\xi^{+}_2,\partial_{x_1}\xi^{+}_3\partial_{x_2}\xi^{+}_1-\partial_{x_1}\xi^{+}_1\partial_{x_2}\xi^{+}_3,\\
&\partial_{x_1}\xi^{+}_1\partial_{x_2}\xi^{+}_2-\partial_{x_1}\xi^{+}_2\partial_{x_2}\xi^{+}_1)=(0,0,1),
\end{aligned}
\end{equation}
and 
\begin{equation}\label{2.22}
\begin{aligned}
&\quad\partial_{s}n|_{s=0}\\&=(-\partial_{x_1}v_3^{+},-\partial_{x_2}v_3^{+},\partial_{x_1}v_1^{+}+\partial_{x_2}v_2^{+})\\&=(-\partial_{x_1}v_3^{+},-\partial_{x_2}v_3^{+},-\partial_{x_3}v_3^{+}),
\end{aligned}
\end{equation}

By taking use of \eqref{2.8}-\eqref{2.10} and \eqref{2.21}-\eqref{2.22}  , we obtain
\begin{equation*}
	\begin{aligned}
	&\partial_s \left\langle(H_{0} \cdot \nabla \eta^{+}) \cdot \nabla_{\eta^{+}}(H_{0} \cdot \nabla \eta^{+}), n\right\rangle |_{s=0}\\
=&\left\langle(H_{0} \cdot \nabla v^{+}) \cdot \nabla_{\xi}(H_{0} \cdot \nabla \xi^{+}) , n|_{s=0}\right\rangle\\
		&- \left\langle(H_{0} \cdot \nabla \xi^{+}) \cdot \nabla_{\xi^{+}} v^{+}\cdot  \nabla_{\xi^{+}}(H_{0} \cdot \nabla \xi^{+}) ,n|_{s=0}\right\rangle \\
		&+\left\langle(H_{0} \cdot \nabla \xi^{+}) \cdot \nabla_{\xi^{+}}(H_{0} \cdot \nabla v^{+}) , n|_{s=0}\right\rangle\\
		&+\left\langle(H_{0} \cdot \nabla \xi^{+}) \cdot \nabla_{\xi^{+}}(H_{0} \cdot \nabla \xi^{+}) , \partial_{s}n|_{s=0} \right\rangle \\
=&\left\langle(a \partial_{x_2} v^{+}) \cdot \nabla(a \partial_{x_2}(x_1+t,x_2)) ,(0,0,1) \right\rangle\\
		&- \left\langle(a \partial_{x_2}  (x_1+t,x_2,x_3)) \cdot \nabla v^{+}\cdot  \nabla(a \partial_{x_2}(x_1+t,x_2,x_3)), (0,0,1)\right\rangle \\
		&+\left\langle(a \partial_{x_2}  (x_1+t,x_2,x_3)) \cdot \nabla(a \partial_{x_2}(v_1,v_2,v_3)) , (0,0,1)\right\rangle\\
		&+\left\langle(a \partial_{x_2}  (x_1+t,x_2,x_3)) \cdot \nabla(a \partial_{x_2}(x_1+t,x_2,x_3)), (-\partial_{x_1}v_3^{+},-\partial_{x_2}v_3^{+},-\partial_{x_3}v_3^{+})\right\rangle\\
=&a^{2} \partial^{2}_{x_2}v_3^{+}.	
	\end{aligned}
\end{equation*}
Similarly, we derive that
\begin{equation*}
\partial_s \left\langle [(H_{0} \cdot \nabla \eta^{-}) \cdot \nabla_{\eta^{-}}(H_{0} \cdot \nabla \eta^{-})] \circ \sigma, n \right\rangle |_{s=0}
=b^{2} \partial^{2}_{x_2}v_3^{-} \circ \sigma.	
\end{equation*}

	For the remaining two term in \eqref{2.20}, noting the fact
	\begin{equation} \label{2.26}
		\begin{aligned}
			&\quad\partial_{t}n |_{s=0}=0,
		\end{aligned}
	\end{equation}
	and 
	\begin{equation}\label{2.27}
		\begin{aligned}
			&\quad\partial_{t}\partial_{s} n|_{s=0}\\&=(-\partial_{t}\partial_{x_1}v_3^{+},-\partial_{t}\partial_{x_2}v_3^{+},\partial_{t}\partial_{x_1}v_1^{+}+\partial_{t}\partial_{x_2}v_2^{+})\\&=(-\partial_{t}\partial_{x_1}v_3^{+},-\partial_{t}\partial_{x_2}v_3^{+},-\partial_{t}\partial_{x_3}v_3^{+}),
		\end{aligned}
	\end{equation}
	In according with \eqref{2.26} and \eqref{2.27},  we have 
	\begin{equation}\label{2.28}
		\begin{aligned}
			\partial_{s}\left\langle   \partial_{t}\eta^{+}- \partial_{t}\eta^{-}  \circ \sigma,  \partial_{t}n\right\rangle\bigg{|}_{s=0}&=0+\left\langle   \partial_{t}\xi^{+}- \partial_{t}\xi^{-}  \circ \sigma,  \partial_{t}\partial_{s}n\right\rangle\bigg{|}_{s=0}\\
			&=-2\partial_{t}\partial_{x_1}v_3^{+},
		\end{aligned}
	\end{equation}
	and 
	\begin{equation}\label{2.29}
		\partial_{s}\left\langle 2 \partial_{x_1} \partial_t{\eta}^{-} \circ \sigma,n\right\rangle\bigg{|}_{s=0}=2\partial_{t}\partial_{x_1}v_3^{-}\circ \sigma,
	\end{equation}

Therefore we have
\begin{equation*}
	\begin{aligned}
		\partial_s M=a^{2} \partial^{2}_{x_2}v_3^{+}-b^{2} \partial^{2}_{x_2}v_3^{-}\circ \sigma  -2\left(\partial_{x_{1}}  \partial_{t} v^{-}_3 \circ \sigma+\partial_{x_{1}}  \partial_{t}{v}^{+}_3\right),
	\end{aligned}
\end{equation*}
Thus the forth equation in \eqref{2.17} becomes:
\begin{equation}\label{2.21b}
	\partial_{x_{3}}  \partial_{s}  q^{+}-\partial_{x_{3}}  \partial_{s} q^{-} =a^{2} \partial^{2}_{x_2}v_3^{+}-b^{2} \partial^{2}_{x_2}v_3^{-}\circ \sigma  -2\left(\partial_{x_{1}}  \partial_{t} v^{-}_3 \circ \sigma+\partial_{x_{1}}  \partial_{t}{v}^{+}_3\right).
\end{equation}
Collecting \eqref{2.21b} and \eqref{2.17} gives the linear system for $\partial_s q$
\begin{equation}\label{2.22b}
\begin{cases}
  \Delta \partial_{s} q= 0  & \text{in} \quad\Omega^{\pm}, \\
   \partial_{x_{3}}  \partial_{s} q=0  &\text {on}\quad  \Gamma^{\pm},\\
  \partial_{s}  q^{+}(t,s, x_{1},x_{2},0)= \partial_{s}  q^{-}(t,s, x_{1}+2t,x_{2},0) \quad  &\text {on } \quad \Gamma, \\
   \partial_{x_{3}}  \partial_{s}  q^{+}-\partial_{x_{3}}  \partial_{s} q^{-} =a^{2} \partial^{2}_{x_2}v_3^{+}-b^{2} \partial^{2}_{x_2}v_3^{-}\circ \sigma \\
   -2\left(\partial_{x_{1}}  \partial_{t} v^{-}_3 \circ \sigma+\partial_{x_{1}}  \partial_{t}{v}^{+}_3\right)\quad &\text {on } \quad \Gamma.
\end{cases}
\end{equation}  

Now we can 	 find an formula for the  $\partial_s q$ using reflection across $\mathbb{T}^1$. Let $\partial_s q$ be the harmonic function on $\Omega_{+}$with boundary data:
	\begin{equation}\label{2.23b}
	\partial_{x_3} \partial_s q^{+}=\left\{\begin{array}{ccc}
		0 & \text { at } & x_3=1 \\
		-\left(\partial_{x_{1}}  \partial_{t} v^{-}_3 \circ \sigma+\partial_{x_{1}}  \partial_{t}{v}^{+}_3\right)\\
        +\frac{1}{2} ( a^{2} \partial^{2}_{x_2}v_3^{+}+b^{2} \partial^{2}_{x_2}v_3^{-}\circ \sigma)& \text { at } & x_3=0
	\end{array}\right\}
	\end{equation} 
	Then let $\partial_s q^{-}(x_{1}+2 t,-x_2,-x_3)=\partial_s q^{+}(x_{1}, x_2,x_3)$. One can verify that   $\partial_s q^{-}(x_{1}+2 t,-x_2,-x_3)$ satisfy three boundary conditions in \eqref{2.22} . Therefore we have an formula for $\partial_s q^{ \pm}$ on the boundary $\Gamma$.  Since we are interested in Kelvin-Helmholtz instability, the instability behavior happen on the boundary. For this, we need restrict  linearized equation \eqref{2.11} to $\mathbb{T}^2 ;$ i. e., we let $x_3=0$. Then we substitute this formula for $\partial_s q^{ \pm}$ into linearized equation \eqref{2.11} on the boundary $\Gamma$, we obtain
	\begin{equation}\label{2.24b}
	\partial^{2}_{t} v^{+}_{3}=\partial_{x_{1}}  \partial_{t}{v}^{+}_3
        +\partial_{x_{1}}  \partial_{t} v^{-}_3 \circ \sigma+\frac{1}{2} ( a^{2} \partial^{2}_{x_2}v_3^{+}+b^{2} \partial^{2}_{x_2}v_3^{-}\circ \sigma) \quad \text { on } \quad \Gamma.
	\end{equation} 
In according with the boundary condition $ \left\langle   v^{+} - v^{-}  \circ \sigma, n\right\rangle=0$ in \eqref{1.15}, via linearing around the special solution $\xi$, we have
\begin{equation}\label{linear boundary}
v_3^+(t,x_1,x_2,0)=v_3^-(t,x_1+2t,x_2,0)\quad \text { on } \quad \Gamma. 
\end{equation}
Applying $\partial_{x_1}\partial_{t}$ and $\partial_{x_2}^2$ to the above equation, we have
\begin{equation}\label{derive1}
\begin{aligned}
\partial_{x_1}\partial_{t}v_3^+(t,x_1,x_2,0)=&\partial_{x_1}\partial_{t}v_3^-(t,x_1+2t,x_2,0)\\
&+2\partial_{x_1}^2v_3^-(t,x_1+2t,x_2,0)\quad \text { on } \quad \Gamma,
\end{aligned}
\end{equation}
and
\begin{equation}\label{derive2}
\partial_{x_2}^2v_3^+(t,x_1,x_2,0)=\partial_{x_2}^2v_3^-(t,x_1+2t,x_2,0)\quad \text { on } \quad \Gamma. 
\end{equation}
Then, by \eqref{derive1} and \eqref{derive2}, it holds
\begin{equation}
\begin{aligned}
\partial_{x_{1}} \partial_{t} v^{+}_{3}+\partial_{x_{1}}\partial_{t} v^{-}_{3}\circ \sigma&=2\partial_{x_{1}} \partial_{t} v^{+}_{3}-2\partial_{x_1}^2v_3^-\circ \sigma\\
&=2\partial_{x_{1}} \partial_{t} v^{+}_{3}-2\partial_{x_1}^2v_3^+ \quad on ~
\Gamma,
\end{aligned}
\end{equation}
and
\begin{equation}
\begin{aligned}
 \frac{1}{2}(a^2\partial^{2}_{x_2}v^+_2+b^2\partial^{2}_{x_2}v^-_2\circ \sigma)= \frac{1}{2}(a^2\partial^{2}_{x_2}v^+_2+b^2\partial^{2}_{x_2}v^+_2) \quad on ~\Gamma.
\end{aligned}
\end{equation}
Therefore \eqref{2.24} becomes 
	\begin{equation}\label{2.2444}
	\partial^{2}_{t} v^{+}_{3}=2\partial_{x_{1}} \partial_{t} v^{+}_{3}-2\partial_{x_1}^2v_3^+   + \frac{a^2+b^2}{2}\partial^{2}_{x_2}v^+_3\quad \text { on } \quad \Gamma.
	\end{equation} 
    
To reflect the characteristic of  velocity jumping on the boundary, we  revert the \eqref{2.2444} to Euler coordinates. To do this we define:
\begin{equation*}
	v^{+}(t, x_{1},x_{2},x_{3})=u^{+}(t,y_{1},y_{2},y_{3})=u^{+}(t, x_{1}+t,x_{2},x_{3}),
\end{equation*} 
\begin{equation*}
	v^{-}(t, x_{1}+2t,x_{2},x_{3})=u^{-}(t,y_{1},y_{2})=u^{-}(t, x_{1}+t,x_{2},x_{3}).
\end{equation*} 
Then by chain rule,  for $v^{+}$ we have 
\begin{equation}\label{2.25}
	\begin{aligned}
 &\partial_{x_{1}} \partial_{t} v^{+}_{3}=\partial_{y_{1}} \partial_{t} u^{+}_{3}+\partial^{2}_{y_{1}} u^{+}_3, \partial_{x_{1}}^{2}v^{+}_{3}=\partial^{2}_{y_{1}} u^{-}_3,\\
&\partial^{2}_{t} v^{+}_{3}=\partial^{2}_{t} u^{+}_{3}+2\partial_{y_{1}} \partial_t u^{+}_3+\partial_{y_{1}}^2 u^{+}_3.
\end{aligned}
\end{equation}
 
Then  the linearized equations \eqref{2.2444} is rewritten as follows:
\begin{equation}\label{2.29b}
	\partial^{2}_{t} u^{+}_{3}+\partial_{y_{1}}^2 u^{+}_3=\frac{a^2+b^2}{2} \partial^{2}_{y_2} u^{+}_3   \quad \text { on } \quad \Gamma(t).
\end{equation}  
Similarly, we also have
\begin{equation}\label{2.30b}
	\partial^{2}_{t} u^{-}_{3}+\partial_{y_{1}}^2 u^{-}_3=\frac{a^2+b^2}{2} \partial^{2}_{y_2} u^{-}_3   \quad \text { on } \quad \Gamma(t).
\end{equation}  
In fact, \eqref{linear boundary} implies $u_3^{+}=u_3^{-}:=u_3$ on $\Gamma(t)$.
Adding equations   \eqref{2.29b} and \eqref{2.30b} gives us
\begin{equation}\label{2.31}
	\partial^{2}_{t} u_{3}+\partial_{y_{1}}^2 u_3=\frac{a^{2} +b^{2}}{2}\partial_{y_{2}}^2 u_3 \quad \text { on } \quad \Gamma(t).
\end{equation}

Clearly the solutions of \eqref{2.31} are linear combinations of the real and imaginary parts of $\exp(  k t+i k y_{1}), k$ is a positive integer. Here we note that the solutions of \eqref{2.31} are indepedent of the variable $y_{2}$ on the boundary $\Gamma(t)$.  the solutions of \eqref{2.31} are qualitatively more unstable for large frequencies $k$ in the direction of the streaming ($y_{1}$-direction). Therefore we can see that the development of the kelvin-Hellmholtz  instability, in the direction of the streaming ($y_{1}$-direction), is influenced by the presence of the magnetic field in the transverse direction ($y_{2}$-direction). 
	
	Now we are ready to solve \eqref{2.11} and \eqref{2.12}. Firstly,  we let $U$ be the space of $R^2$-valued functions satisfying boundary \eqref{1.4} and \eqref{1.5},  we decompose $U$ into $U_0 \oplus U_h$,where $U_{0}$  is defined as : $U_{0} = \{ u: \Omega \rightarrow \mathbb{R}^{2} |  u_{3}(y_{1}, y_{2},0)=0\} $ and  the elements of $U_{h}$  are harmonic. If $u\in U_{0}$, it follows that $u_{3}=v_{3}=0$ on the boundary $\Gamma$, then
	we can see that  $\partial_{s} q$ satisfy the following system
	\begin{equation*}
	\begin{cases}
	\Delta \partial_{s} q= 0  & \text{in} \quad\Omega^{\pm}, \\
	\partial_{x_{3}}  \partial_{s} q=0  &\text {on}\quad  \Gamma^{\pm},\\
	\partial_{s}  q^{+}(t, x_{1}, x_{2},0)= \partial_{s}  q^{-}(t, x_{1}+2t,x_{2},0) \quad  &\text {on } \quad \Gamma, \\
	\partial_{x_{3}}  \partial_{s}  q^{+}-\partial_{x_{3}}  \partial_{s} q^{-}\circ \sigma =0\quad &\text {on } \quad \Gamma.
	\end{cases}
	\end{equation*}  
	From this, we find that $\partial_s q$ is zero, therefore \eqref{2.11} and \eqref{2.12}  becomes calssical wave equations:
	\begin{equation*}
	\partial^{2}_t{v}^{+}=  a^{2} \partial^{2}_{x_2} v^{+}  \quad \text{on}\quad\Omega^{+}.
	\end{equation*}
	and
	\begin{equation*} 
	\partial^{2}_t{v}^{-}=  b^{2} \partial^{2}_{x_2} v^{-} \quad \text{on}\quad\Omega^{-}.
	\end{equation*}
	We can find  that the  solution of \eqref{2.11} and \eqref{2.12} are:
	%\begin{equation}\label{2.32}
%	\left\{\begin{array}{ll}
%	\left e^{ \pm i a k t}  (i e^{ i  k (x_{1} +t)}(1-3x_{2}^{2}), k e^{  i  k (x_{1} +t) } (1-x_{2}^{2}) x_{2} )\right) &x_2 \geq 0 \\
%	\left e^{ \pm i a k t}  (i e^{  i  k (x_{1} -t)}(1-3x_{2}^{2}) , k e^{  i  k (x_{1} +t) } (1-x_{2}^{2}) x_{2} )\right)  &x_2<0
%	\end{array}\right\}
%	\end{equation}  
	\begin{equation}\label{2.32}
	v(t)=v(0)+ t \partial_{t} v(0).
	\end{equation}  
.	

	If $u\in U_{h}$,  the elements of $U_{h}$  are harmonic, we extend $u_{3}$ on the boundary to the whole domain, i.e. we assume $u_{3}(t,y_{1},y_{2},y_{3})= e^{ k}  e^{i k y_{1}} W(y_{3})$.  To make sure that the solution satisfy the equation and boundary conditions, we deduce that $W(y_{3})$ must satisfy the system:
\begin{equation*}
	\begin{cases}
	\partial_{y_{2}}^{2}W(y_{3})-k ^{2}W(y_{3})=0  & \text{in} \quad\Omega, \\
	W(y_{3}=\pm 1)=0  &\text {on}\quad  \Gamma^{\pm},\\
	W(y_{3}=0)^{+}=W(y_{3}=0)^{-} \quad  &\text {on } \quad \Gamma,
	\end{cases}
	\end{equation*}   
Following the construction in Ebin paper \cite{E2}, we can find the solutions of this system are linear combinations of the real and imaginary parts of the functions
	\begin{equation}\label{2.33}
	W(y_{3})=\left\{
	\begin{aligned}
	&\cosh k y_{3}- \coth k \sinh ky_{3}&y_{3}\geq0,\\
	&\cosh k y_{3}+ \coth k \sinh ky_{3} &y_{3}<0,
	\end{aligned}
	\right.
	\end{equation}
Then taking use of incompressibility condition  $\partial_{y_{1}}u_{1}= -  \partial_{y_{3}}u_{3}$, we see that $u_{1}= e^{\lambda k}  e^{i k y_{1}} V(y_{3})$ where $V(y_{3})$ is the solution of the following system
\begin{equation}\label{2.33}
	V(x_{3})=\left\{
	\begin{aligned}
	&i(\sinh k y_{3}- \coth k \cosh ky_{3})&y_{3}\geq0,\\
	&i(\sinh k y_{3}+ \coth k \cosh ky_{3}) &y_{3}<0,
	\end{aligned}
	\right.
	\end{equation}
 
From above construction,  we can deduce  that if $u\in U_{h}$, then	
	\begin{equation}\label{2.35}
\partial^{2}_{t} u+\partial_{y_{1}}^2 u=\frac{a^{2} +b^{2}}{2}\partial_{y_{2}}^2 u \text { on } \Omega .
	\end{equation}
	Therefore we see  that the  solution of \eqref{2.11} and \eqref{2.12} are following form:
	\begin{equation}\label{2.36}
	v(t,x_{1}, x_{2}, x_{3})= P_{a}\left\{\begin{array}{ll}
	e^{ \pm  k t}e^{i k (x_{1}+t)}\left(W(x_{3}) ,0,V(x_{3}) \right) & x_{3} \geq 0 \\
	e^{ \pm  k t}e^{i k (x_{1}-t)}\left(W(x_{3}) ,0,V(x_{3}) \right)& x_{3}<0
	\end{array}\right\}
	\end{equation}
	where " $\mathrm{Pa}$ " means real or imaginary part. Equations \eqref{2.32} and \eqref{2.36} provide us with a full set of solutions of the linearized Helmholtz problem \eqref{2.11} and \eqref{2.12}.
	
	Since $k\rightarrow \infty$, the solutions \eqref{2.36} with  a higher frequency grow faster in time, which provides a mechanism for Kelvin-Helmholtz instability.
	
	\section{Discontinuous Dependence on Initial Data}
	To show ill-posedness of  Helmholtz problems, we will follow the basic method of [E]. Firstly, we define the operator $R_{\eta}f:=f\circ\eta$, then $R_{\eta^{-1}}f:=f\circ\eta^{-1}$, thus we define the differential operator $A_{\eta}$ by $A_{\eta}:=R_{\eta}AR_{\eta^{-1}}$.
 \begin{lemm}\label{nonlinear parameter}
		Via parameterization of $\zeta(t,x)$, we take $\eta(t,s,x):=\zeta(t,x)+sv(t,x)+o(s)$, then we have the following operator equality.
		\begin{enumerate}
		\item  $\partial_{s}\nabla_{\eta(t,s)}|_{s=0}=R_{\eta}[u\cdot\nabla,\nabla]R_{\eta^{-1}}=[u\cdot\nabla,\nabla]_{\eta}$;
			\item  $\partial_{s}\Delta_{\eta(t,s)}|_{s=0}=R_{\eta}[u\cdot\nabla,\Delta]R_{\eta^{-1}}=[u\cdot\nabla,\Delta]_{\eta}$;
				\item $\Delta^{-1}_{\eta}=(\Delta_{\eta})^{-1}$;
			\item  	$\partial_{s}\Delta^{-1}_{\eta(t,s)}|_{s=0}=([u\cdot\nabla,\Delta^{-1}]-\mathcal{H}(u\cdot\nabla)\Delta^{-1})_{\eta}$,
				where $\mathcal{H}$ projects a functon onto its harmonic parts, i.e. 
			$$\mathcal{H}: f\longmapsto \mathcal{H}f $$ with $\Delta \mathcal{H}f =0$.
			\item Let $A,B$ be some differential operator, then it holds $$(AB)_{\eta}=A_{\eta}B_{\eta}, ~(A-B)_{\eta}=A_{\eta}-B_{\eta}$$
			and $$[A,B]_{\eta}=[A_{\eta},B_{\eta}]$$
			\end{enumerate}
		
\end{lemm}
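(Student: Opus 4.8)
The plan is to reduce everything to one structural fact and a short calculus of the conjugation $A\mapsto A_\eta=R_\eta A R_{\eta^{-1}}$, where throughout $\eta$ denotes the base map $\eta(t,0)=\zeta$, $v=\partial_s\eta|_{s=0}$ and $u=v\circ\eta^{-1}$. The structural fact is the covariance identity
\[
\nabla_\eta=R_\eta\nabla R_{\eta^{-1}},\qquad \Delta_\eta=R_\eta\Delta R_{\eta^{-1}},
\]
i.e. the geometric operators of \eqref{1.9} are literally the conjugates of the flat ones. This comes from the chain rule: for $g=f\circ\eta^{-1}$ one has $\partial_{x_i}(g\circ\eta)=(\partial_{y_k}g)(\eta)\,\partial_{x_i}\eta_k$, so $(\partial_{y_k}g)\circ\eta=\mathcal A^{i}_{k}\partial_i f=(\nabla_\eta)_k f$, and squaring gives the identity for $\Delta_\eta$ using $R_{\eta^{-1}}R_\eta=\mathrm{Id}$. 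Claim 5 is then purely formal: since $\eta\mapsto R_\eta$ is a contravariant homomorphism and $R_\eta,R_{\eta^{-1}}$ are linear, $(AB)_\eta=R_\eta ABR_{\eta^{-1}}=(R_\eta AR_{\eta^{-1}})(R_\eta BR_{\eta^{-1}})=A_\eta B_\eta$, whence $(A-B)_\eta=A_\eta-B_\eta$ and $[A,B]_\eta=[A_\eta,B_\eta]$.

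The engine for the remaining claims is the pair of variation formulas
\[
\partial_s R_{\eta(t,s)}\big|_{s=0}=R_\eta(u\cdot\nabla),\qquad \partial_s R_{\eta(t,s)^{-1}}\big|_{s=0}=-(u\cdot\nabla)R_{\eta^{-1}}.
\]
The first is the chain rule, $\partial_s(f\circ\eta)|_{s=0}=(\nabla f\circ\eta)\cdot v=\big((u\cdot\nabla f)\circ\eta\big)$ after writing $v=u\circ\eta$; the second follows for free by differentiating $R_{\eta^{-1}}R_\eta=\mathrm{Id}$ in $s$. Claims 1 and 2 are then immediate from the product rule, treating $\nabla$ (resp. $\Delta$) as $s$-independent:
\[
\partial_s\nabla_{\eta}\big|_{s=0}=R_\eta(u\cdot\nabla)\nabla R_{\eta^{-1}}-R_\eta\nabla(u\cdot\nabla)R_{\eta^{-1}}=R_\eta[u\cdot\nabla,\nabla]R_{\eta^{-1}}=[u\cdot\nabla,\nabla]_\eta,
\]
and the same computation with $\Delta$ in place of $\nabla$ gives claim 2.

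Claim 3 is the compatibility of conjugation with inversion. From $\Delta_\eta=R_\eta\Delta R_{\eta^{-1}}$ and $R_\eta R_{\eta^{-1}}=R_{\eta^{-1}}R_\eta=\mathrm{Id}$ one reads off that $R_\eta\Delta^{-1}R_{\eta^{-1}}$ is the solution operator of $\Delta_\eta$, so $(\Delta_\eta)^{-1}=\Delta^{-1}_\eta$; the only thing to check is that $R_\eta$ carries the boundary conditions defining the flat solver $\Delta^{-1}$ to those defining $(\Delta_\eta)^{-1}$, which holds because $\eta$ is a boundary-preserving, volume-preserving ($J=1$) diffeomorphism. I would flag this boundary bookkeeping explicitly, because it is exactly the place where $\Delta^{-1}$ being only a one-sided inverse enters, and that one-sidedness is the source of the correction in claim 4.

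Claim 4 is the main obstacle and the only place a genuine correction survives. Using the derivative-of-inverse identity together with claims 2, 3 and 5,
\[
\partial_s\Delta^{-1}_{\eta}\big|_{s=0}=-\Delta^{-1}_\eta[u\cdot\nabla,\Delta]_\eta\Delta^{-1}_\eta=-\big(\Delta^{-1}[u\cdot\nabla,\Delta]\Delta^{-1}\big)_\eta,
\]
so everything hinges on simplifying $\Delta^{-1}[u\cdot\nabla,\Delta]\Delta^{-1}=\Delta^{-1}(u\cdot\nabla)\Delta\Delta^{-1}-\Delta^{-1}\Delta(u\cdot\nabla)\Delta^{-1}$. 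Here the Poisson solver is only a right inverse, $\Delta\Delta^{-1}=\mathrm{Id}$, while $\Delta^{-1}\Delta=\mathrm{Id}-\mathcal H$, since $\Delta^{-1}\Delta g$ and $g$ have the same Laplacian but $\Delta^{-1}\Delta g$ carries the homogeneous boundary data, so the two differ by the harmonic part $\mathcal Hg$. Substituting yields $\Delta^{-1}[u\cdot\nabla,\Delta]\Delta^{-1}=-[u\cdot\nabla,\Delta^{-1}]+\mathcal H(u\cdot\nabla)\Delta^{-1}$, and conjugating gives exactly $\big([u\cdot\nabla,\Delta^{-1}]-\mathcal H(u\cdot\nabla)\Delta^{-1}\big)_\eta$. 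The delicate point, and where I expect the real work to lie, is pinning down the precise one-sidedness $\Delta\Delta^{-1}=\mathrm{Id}$ versus $\Delta^{-1}\Delta=\mathrm{Id}-\mathcal H$ consistently with the boundary conditions of the pressure problem solved in Section 2, since it is precisely this asymmetry that produces — rather than cancels — the harmonic term $\mathcal H(u\cdot\nabla)\Delta^{-1}$.
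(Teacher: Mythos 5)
Your handling of items 1, 2, 3 and 5 is fine, and is actually more self-contained than the paper's own proof: the paper simply refers items 1, 2 and 4 to Ebin and only writes out the one-line conjugation arguments for items 3 and 5, whereas you derive items 1 and 2 honestly from the variation formulas $\partial_s R_{\eta(s)}|_{s=0}=R_\eta(u\cdot\nabla)$ and $\partial_s R_{\eta(s)^{-1}}|_{s=0}=-(u\cdot\nabla)R_{\eta^{-1}}$. The gap is in item 4, at the step you call the ``derivative-of-inverse identity.'' That identity is a theorem only for two-sided inverses, and by your own accounting $\Delta^{-1}$ is merely a right inverse. Differentiating $\Delta_{\eta(s)}\Delta_{\eta(s)}^{-1}=\mathrm{Id}$ gives
\[
\Delta_\eta\Bigl(\partial_s\Delta_{\eta(s)}^{-1}\big|_{s=0}\Bigr)=-[u\cdot\nabla,\Delta]_\eta\,\Delta_\eta^{-1},
\]
and applying $\Delta_\eta^{-1}$ on the left, with $\Delta_\eta^{-1}\Delta_\eta=\mathrm{Id}-\mathcal{H}_\eta$, yields only
\[
\partial_s\Delta_{\eta(s)}^{-1}\big|_{s=0}=-\Delta_\eta^{-1}[u\cdot\nabla,\Delta]_\eta\Delta_\eta^{-1}+\mathcal{H}_\eta\Bigl(\partial_s\Delta_{\eta(s)}^{-1}\big|_{s=0}\Bigr).
\]
So the operator algebra determines the derivative only modulo its harmonic part, and that harmonic part is governed by exactly what the algebra cannot see: the boundary conditions defining $\Delta_{\eta(s)}^{-1}$ are themselves $s$-dependent (the Neumann condition is written with $\nabla_{\eta(s)}$, the jump conditions with $\sigma_s$ and $n_s$), so $\partial_s(\Delta_{\eta(s)}^{-1}g)|_{s=0}$ need not satisfy the homogeneous boundary conditions of the $s=0$ problem, i.e. $\mathcal{H}_\eta(\partial_s\Delta_{\eta(s)}^{-1}|_{s=0})$ need not vanish. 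Worse, since your subsequent algebra (which is correct) shows that $-\bigl(\Delta^{-1}[u\cdot\nabla,\Delta]\Delta^{-1}\bigr)_\eta$ is \emph{identically} equal to the right-hand side of item 4, your first display is not a lemma from which item 4 follows; it \emph{is} item 4, restated. The argument is circular: the harmonic correction in your final formula is produced by algebra that presupposed the true harmonic correction was absent.

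What a genuine proof must supply is the computation of that harmonic part, and this is precisely what Ebin's cited proof does. One route: write $\Delta_{\eta(s)}^{-1}=R_{\eta(s)}S_sR_{\eta(s)^{-1}}$ with $S_s$ the Eulerian solution operator on the configuration at parameter $s$; differentiating gives $[u\cdot\nabla,\Delta^{-1}]_\eta$ from the two conjugation factors plus the shape derivative $\dot S$ coming from the motion of the interface, and one then shows, by differentiating the interface and boundary conditions along the moving boundary, that $\dot Sg$ is the harmonic function carrying the boundary data of $-(u\cdot\nabla)\Delta^{-1}g$, i.e. $\dot S=-\mathcal{H}(u\cdot\nabla)\Delta^{-1}$. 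That boundary computation is the entire substance of item 4. The paper itself corroborates that these corrections are not negligible: in its later lemma computing $\partial_s q_1$ and $\partial_s q_2$, the derivative of the pressure contains explicit terms $\mathcal{H}^{\partial}_{\eta}(\alpha,\beta)$ generated by differentiating the jump conditions (through $\partial_s\sigma$, $[u\cdot\nabla,\nabla]_\eta$ and $\partial_s n$) --- exactly the harmonic contributions that your derivative-of-inverse step silently discards. You identify this as ``the delicate point,'' which is the right instinct, but flagging it is not the same as closing it.
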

\begin{proof}
	The proof of item 1,2,4 can be refered to [E]. 
	Since 
	$$(R_{\eta}\Delta R_{\eta^{-1}})^{-1}=R_{\eta}\Delta^{-1} R_{\eta^{-1}},$$
	The item 3 is proved.
	Since $$(AB)_{\eta}=R_{\eta}AR_{\eta^{-1}}R_{\eta}BR_{\eta^{-1}}=A_{\eta}B_{\eta},$$
	$$(A-B)_{\eta}=R_{\eta}(A-B)R_{\eta^{-1}}=R_{\eta}AR_{\eta^{-1}}-R_{\eta}BR_{\eta^{-1}},$$
	$$[A,B]_{\eta}=(AB-BA)_{\eta}=A_{\eta}B_{\eta}-B_{\eta}A_{\eta}$$
	Thus item 5 is proved.
\end{proof}
We state the following lemma to stress the equivalence of the two kinds of definition of $\nabla_{\eta}$.
\begin{lemm}
	 If $\mathcal{A}$ is defined by \eqref{1.9}, we have the equality 
 $\nabla_{\eta}=\mathcal{A}\nabla=R_{\eta}\nabla R_{\eta^{-1}}$.
\end{lemm}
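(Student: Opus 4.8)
The plan is to establish the two asserted equalities separately. The first, $\nabla_{\eta}=\mathcal{A}\nabla$, is merely the intrinsic (matrix) rewriting of the componentwise definition recorded in \eqref{1.9}: there one sets $(\nabla_{\eta})_i=\mathcal{A}^{j}_{i}\partial_{j}$, and contracting the matrix $\mathcal{A}=(\nabla\eta)^{-1}$ (with $\mathcal{A}^{j}_{i}$ read as the entry in row $i$, column $j$) against the gradient column $\nabla=(\partial_{1},\partial_{2},\partial_{3})^{T}$ reproduces exactly this formula. So nothing beyond unwinding notation is needed for this half, and I would dispatch it in a single line after fixing the index convention.

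The substance lies in the second equality $\mathcal{A}\nabla=R_{\eta}\nabla R_{\eta^{-1}}$, which I would verify by acting on an arbitrary scalar $f$ on the Lagrangian domain and chasing the chain rule. First I would set $g:=R_{\eta^{-1}}f=f\circ\eta^{-1}$, a function of the Eulerian variable $y$, and compute its Eulerian gradient componentwise,
\begin{equation*}
\partial_{y_{i}}g(y)=(\partial_{j}f)(\eta^{-1}(y))\,\partial_{y_{i}}(\eta^{-1})_{j}(y).
\end{equation*}
The Jacobian factor is then handled by the inverse function theorem, which gives $\nabla(\eta^{-1})(y)=\big[(\nabla\eta)(\eta^{-1}(y))\big]^{-1}$, so that the whole expression is explicit in terms of $\eta$.

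Next I would apply $R_{\eta}$, i.e. evaluate at $y=\eta(x)$. The crucial cancellation is $\eta^{-1}(\eta(x))=x$, whence $(\partial_{j}f)(\eta^{-1}(\eta(x)))=\partial_{j}f(x)$ and the Jacobian factor collapses to $\big[(\nabla\eta)(x)\big]^{-1}=\mathcal{A}(x)$. Collecting indices then yields
\begin{equation*}
\big(R_{\eta}\nabla R_{\eta^{-1}}f\big)_{i}(x)=\mathcal{A}^{j}_{i}(x)\,\partial_{j}f(x)=(\nabla_{\eta}f)_{i}(x),
\end{equation*}
which is the claim. I expect the only delicate point to be bookkeeping of the index conventions, namely confirming that the row/column placement coming out of the inverse function theorem matches the placement $\mathcal{A}^{j}_{i}$ fixed in \eqref{1.9}, so that the pulled-back Jacobian is precisely $\mathcal{A}$ and not its transpose; once the indices are aligned, the identity is a direct consequence of the chain rule and the functoriality already exploited in Lemma \ref{nonlinear parameter}.
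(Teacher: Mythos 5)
Your proposal is correct and follows essentially the same route as the paper: the paper likewise treats $\nabla_{\eta}=\mathcal{A}\nabla$ as definitional and proves $R_{\eta}\nabla R_{\eta^{-1}}f=\mathcal{A}\nabla f$ by the chain rule, obtaining the key Jacobian identity by differentiating $\eta^{-1}\circ\eta=I_d$ (which is exactly your appeal to the inverse function theorem, $D\eta^{-1}\circ\eta=(D\eta)^{-1}$). The only difference is cosmetic: you compute the Eulerian gradient first and then compose with $\eta$, while the paper carries out the composition and differentiation in one indexed computation.
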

\begin{proof}
	Noting the fact $\eta^{-1}\circ\eta=I_d$, where $I_d$ is the identity map. Applying the differential operator $D$, thus $D\eta^{-1}\circ\eta\cdot D\eta=I_{2\times2}$, where $I_{2\times2}$ is the unit $2\times2$ matrix. Then $D\eta^{-1}\circ\eta=(D\eta)^{-1}$.
	Since
	\begin{align}
R_{\eta}\nabla^{i}R_{\eta^{-1}}f=\partial_{i}(f\circ\eta^{-1})\circ\eta=[\partial_{j}f\circ\eta^{-1}\cdot\partial_{i}\eta_{j}^{-1}]\circ\eta=(D\eta)^{-1}_{ij}\partial_{j}f
	\end{align} 
	Then $R_{\eta}\nabla R_{\eta^{-1}}f=\mathcal{A}\nabla f=\nabla_{\eta}f$.
	\end{proof}
Moreover, we have the following fact.
\begin{lemm}\label{lem:lemm1}
It holds $D_{\eta}\eta_{t}=(D\eta)^{-1}D\eta_{t}$.
\end{lemm}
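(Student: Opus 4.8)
The plan is to read $D_{\eta}$ as the full Eulerian Jacobian operator pushed forward to Lagrangian coordinates, namely $D_{\eta}=R_{\eta}DR_{\eta^{-1}}$, exactly in the spirit of the preceding lemma where the single-row version $\nabla_{\eta}=\mathcal{A}\nabla=R_{\eta}\nabla R_{\eta^{-1}}$ was established. Since $D$ is nothing but the tuple of gradients of the components of a vector field, the cleanest route is to apply that preceding identity componentwise to the three scalar components of $\eta_{t}$. This immediately yields $D_{\eta}\eta_{t}=\mathcal{A}D\eta_{t}=(D\eta)^{-1}D\eta_{t}$, which is the claim; the only thing one must check is that the contraction in $(\nabla_{\eta})_{i}=\mathcal{A}^{j}_{i}\partial_{j}$ from \eqref{1.9} is performed on the differentiation index and not on the vector-component index.

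To make the Eulerian content transparent, and to double-check the convention, I would also carry out the direct computation. Recall that the Lagrangian velocity is $\eta_{t}=v$ and that the associated Eulerian velocity is $u=v\circ\eta^{-1}=\eta_{t}\circ\eta^{-1}$. Applying the definition, $D_{\eta}\eta_{t}=R_{\eta}DR_{\eta^{-1}}\eta_{t}=\big[D(\eta_{t}\circ\eta^{-1})\big]\circ\eta=(Du)\circ\eta$. On the other hand, writing $\eta_{t}=u\circ\eta$ and differentiating by the chain rule gives $D\eta_{t}=\big[(Du)\circ\eta\big]\,D\eta$, so that $(Du)\circ\eta=D\eta_{t}\,(D\eta)^{-1}$. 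Comparing the two displays produces $D_{\eta}\eta_{t}=(D\eta)^{-1}D\eta_{t}$, once $\mathcal{A}=(D\eta)^{-1}$ is understood to contract the derivative slot.

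I expect no genuine difficulty here: the statement is an immediate corollary of the preceding lemma together with the chain rule, and the entire subtlety is the bookkeeping of indices and transposes fixed by the convention in \eqref{1.9}. The one place to be careful is the order of matrix multiplication in $D\eta_{t}=\big[(Du)\circ\eta\big]D\eta$, i.e. that the inverse Jacobian multiplies on the side corresponding to the spatial-derivative index; once this is matched to the definition of $\mathcal{A}$, the identity closes.
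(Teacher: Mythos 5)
Your proposal is correct and takes essentially the same route as the paper: the paper's proof is exactly the componentwise computation $D_{\eta}^{j}\eta_{t}=R_{\eta}D_{j}R_{\eta^{-1}}\eta_{t}=\bigl(\partial_{i}(\eta^{-1})_{j}\circ\eta\bigr)\partial_{j}\eta_{t}$, i.e. the chain-rule argument of the preceding lemma applied to the components of $\eta_{t}$, combined with the identity $D(\eta^{-1})\circ\eta=(D\eta)^{-1}$. Your second, Eulerian computation via $u=\eta_{t}\circ\eta^{-1}$ is only a consistency check on the index convention and does not change the substance of the argument.
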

\begin{proof}
	Since it holds $$D_{\eta}^{j}\eta_{t}=R_{\eta}D_{j}R_{\eta^{-1}} \eta_{t}=(\partial_{j}\eta_{t}\circ\eta^{-1}\cdot\partial_{i}(\eta^{-1})_{j})\circ\eta=\partial_{i}(\eta^{-1})_{j}\circ\eta\partial_{j}\eta_{t},$$
	which implies $D_{\eta}\eta_{t}=(D\eta)^{-1}D\eta_{t}$.
\end{proof}
\subsection{Estimates of nonlinear terms.}
\quad \quad  Next, we give the unstability of the nonlinear equations and estimate the nonlinear terms of nonlinear equations \eqref{1.15}.

	In order to show the ill-posedness of the nonlinear equations \eqref{1.15}, we need to find an equation for the perturbation  $\chi_{n}(t):=\eta_{n}(t)-\xi(t)$.
 Then, we introduce some function spaces as following,
 	$$
 	\begin{aligned}
M:=&\left \{\eta^{\pm }: \Omega^{\pm} \rightarrow R^{3}, \big |~ 
	J(\eta^{\pm})=1, \right.  (\eta^{-})^{-1} \circ \eta^{+}= (\sigma(t,x_{1}),0), \\& \text{where}~  \sigma: \mathbb{T}^{2}\rightarrow \mathbb{T}^{2}, ~\eta_{3}^{\pm}(x, \pm 1)=\pm 1, \left.\eta|_{\bar{\Omega}_{+}} \right.\ \text {is smooth }  \text {and}\\& \eta|_{\Omega_{-}} \text {extends smoothly to } \bar{\Omega}_{-}\left. \right \} 
 	\end{aligned}
 $$
 where $\eta _{2}$ means the second component of $\eta$. And its tangent space is:
 	$$
 	\begin{aligned}
T_{\eta} M:=&\left \{v^{\pm }: \Omega^{\pm } \rightarrow R^{3}, H^{\pm }: \Omega^{\pm } \rightarrow R^{3}\big| 
	{\rm div} v^{ \pm}=0, {\rm div}  H^{ \pm}=0, \right. \  v^{\pm }_{3}(x, \pm 1)=0,\\& H^{\pm }_{3}(x,\pm 1)=0,  \left\langle   v^{+} - v^{-}  \circ \sigma,\left. n\right\rangle=0 \right.\ \text{and} \\& 	\left\langle   H^{+}, n\right\rangle=0, \left\langle   H^{-}  \circ \sigma, n\right\rangle=0 ~\mathrm{on}~ \Gamma,\left.v|_{\bar{\Omega}_{+}} \right.\ \text {is smooth }  \text {and}\\& v|_{\Omega_{-}} \text {extends smoothly to } \bar{\Omega}_{-}\left. \right \} 
 	\end{aligned}
 $$
 where $v_{3}$ and $H_{3}$ mean the third component of $v$ and $H$, respectively.
	
	Unfortunately, however, since $M$ is not an affine space, when  $\eta_{n}(t)$ and $\xi(t)$ belong to $M$, but $\eta_{n}(t)-\xi(t)\notin M$. Thus, we shall consider a bigger affine space $N$ which contains $M$ as follows:
 	$$
 	\begin{aligned}
N:= \{&\eta^{\pm }: \Omega^{\pm} \rightarrow R^{3}, \big |~\eta_{3}^{\pm}(x, \pm 1)=\pm 1, \eta|_{\bar{\Omega}_{+}} \ \text {is smooth }  \text {and}\\& \eta|_{\Omega_{-}} \text {extends smoothly to } \bar{\Omega}_{-} \} 
 	\end{aligned}
 $$
the corresponding tangent space at $\zeta$ is:
	
	$$
	\begin{aligned}
		T_{\eta} N \equiv & \left\{v^{\pm }: \Omega^{\pm } \rightarrow R^{3} |v^{\pm }_{3}(x, \pm 1)=0,\right.  v|_{\Omega_{+}} \text {is smooth } \\
		& \text { and } \left.v|_{\Omega_{-}} \text {extends smoothly to } \bar{\Omega}_{-}\right\} .
	\end{aligned}
	$$
Now we see that 	$\eta_{n}(t)-\xi(t)\in T_{\eta}N $.
	
	To find a differential equation for $\eta_{n}(t)-\xi(t)$, similar to Ebin \cite{E2}, we define:
	$$
	Z(\eta_n, \partial_{t}{\eta}_n)=(H_{0} \cdot \nabla \eta_n) \cdot \nabla_{\eta_n}(H_{0} \cdot \nabla \eta_n)-\nabla_{\eta_n} q.
	$$
Therefore, the nonlinear problem can be  rewritten as follows:
	
	$$
\partial_{t}^{2}{\eta}_n=Z(\eta_n,  \partial_{t}{\eta_n}).
	$$
	
	Now let $\chi_{n}=\eta_{n}-\xi$. Then suppressing the subscript $n$, we find that $\chi(t)$ satisfies:
	\begin{equation}\label{zequation}
		\begin{aligned}
	\partial_{t}^{2}{\chi} & =Z(\eta, \partial_{t}{\eta})-Z(\xi, \partial_{t}{\xi}) \\
		& =\int_{0}^{1} D Z(\alpha(s), \partial_{t}{\alpha}(s))(\chi, \partial_{t}{\chi}) d s,
		\end{aligned}
	\end{equation}
where $\alpha(s)$ is defined as follows
	$$
\alpha(s)=\xi+s(\eta-\xi),
	$$
and $D Z(\alpha(s), \partial_{t}{\alpha}(s))(\chi, \partial_{t}{\chi})$ means the derivative of $Z$ at $(\alpha(s), \partial_{t}{\alpha}(s))$ in direction $(\chi, \partial_{t}{\chi})$. By direct calculation, \eqref{zequation} becomes:
	\begin{equation*}
		\begin{aligned}
\partial_{t}^{2}{\chi}=&D Z(\xi, \partial_{t}{\xi})(\chi, \partial_{t}{\chi})\\
&+\int_{0}^{1}(D Z(\alpha(s), \partial_{t}{\alpha}(s))(\chi, \partial_{t}{\chi})-D Z(\xi, \partial_{t}{\xi})(\chi, \partial_{t}{\chi}) d s,
\end{aligned}
\end{equation*}
then
	\begin{equation}\label{double integration}
		\begin{aligned}
	\partial_{t}^{2}{\chi}=&D Z(\xi, \partial_{t}{\xi})(\chi, \partial_{t}{\chi})\\
	&+\int_{0}^{1} \int_{0}^{s} D^{2} Z(\alpha(\sigma), \partial_{t}{\alpha}(\sigma))((\chi, \partial_{t}{\chi}),(\chi, \partial_{t}{\chi})) d \sigma ds.
	\end{aligned}.
\end{equation}

 	It is clear \eqref{double integration} is simply the linearized term of $(\chi, \partial_{t}{\chi})$ plus the double integral term. We shall consider it as a linear equation in $(\chi, \partial_{t}{\chi})$ with inhomogeneous term provided by the double integral. Before we estimate the nonlinear terms, we need to introduce four function space.  Let $M^{s}$ be the $H^{s}$-completion of our configuration $M$, we denote $M^{s}=M\cap H^{s}$, similarly we denote $N^{s}=N\cap H^{s}$ and the corresponding tangent spaces  $T_{\eta} M^{s}=T_{\eta} M\cap H^{s} $ and $T_{\eta} N^{s}=T_{\eta} N\cap H^{s} $. The first step in estimating the solution of \eqref{double integration} is to estimate $D^{2} Z$. To do this we first compute $D Z(\eta, \partial_{t}{\eta})$ for $\eta$ near $\xi$ and then estimate the second derivative. Before computing the second derivative of $Z(\eta, \partial_{t}{\eta})$, we state some vital lemmas.

	\begin{lemm}
The unique solution of the two-phase Poission equations \eqref{2.16} can be represented by the sum of solutions of two different elliptic equations \eqref{decompose1}, \eqref{decompose2}.The precise form of the solution is given by 
\begin{equation} \label{total pressure}
q=q_{1}+q_{2}=\mathcal{H}^{\partial}_{\eta}(0,M)+\Delta_{\eta}^{-1}[tr((\nabla_{\eta} \partial_{t}{\eta})^2)+tr((\nabla_{\eta}(B_{0}\cdot \nabla \eta))^2)].
\end{equation}
	\end{lemm}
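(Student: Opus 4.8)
\emph{Overall strategy.} For a fixed flow map $\eta$ the coupled problem \eqref{2.16} is \emph{linear} in the unknown $q$, so the plan is to solve it by superposition, assigning the interior forcing and the interfacial jump to two separate auxiliary problems. I would let $q_{2}$ carry the bulk source $f$, namely the right-hand side of the first line of \eqref{2.16}, subject to homogeneous data on $\Gamma^{\pm}$ and on $\Gamma$; and I would let $q_{1}$ carry the conormal jump $M$ across $\Gamma$ while being $\Delta_{\eta}$-harmonic in each phase. These are exactly the two elliptic problems \eqref{decompose1} and \eqref{decompose2}, and the content of the lemma is that $q=q_{1}+q_{2}$.

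\emph{The conormal data on $\Gamma^{\pm}$ is trivial.} The first reduction, which legitimizes both the ``$0$'' in $\mathcal{H}^{\partial}_{\eta}(0,M)$ and the use of the genuine inverse $\Delta_{\eta}^{-1}$, is to show that the Neumann condition of \eqref{2.16} on the fixed walls is in fact homogeneous. On $\Gamma^{+}$ one has $H_{0}\cdot\nabla\eta=a\,\partial_{x_{2}}\eta$ and, since $\eta_{3}\equiv1$ there, $\partial_{x_{2}}\eta_{3}=0$ identically on the wall. Hence the only surviving term in the conormal component is the normal-derivative one,
\[
\langle (H_{0}\cdot\nabla\eta)\cdot\nabla_{\eta}(H_{0}\cdot\nabla\eta),\,e_{3}\rangle
= a^{2}\,(\partial_{x_{2}}\eta_{k})\,\mathcal{A}^{3}_{k}\,\partial_{3}\partial_{x_{2}}\eta_{3},
\]
the tangential derivatives of $\partial_{x_{2}}\eta_{3}$ dropping out, and its prefactor $(\partial_{x_{2}}\eta_{k})\mathcal{A}^{3}_{k}$ is the $(3,2)$ entry of $(\nabla\eta)^{-1}\nabla\eta=I$, hence equals $\delta_{32}=0$. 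The same computation with $b$ on $\Gamma^{-}$ shows the conormal data vanishes on both walls.

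\emph{Assembling the solution.} With this in hand I would define $q_{1}:=\mathcal{H}^{\partial}_{\eta}(0,M)$ as the unique function that is $\Delta_{\eta}$-harmonic in $\Omega^{\pm}$, has zero conormal derivative on $\Gamma^{\pm}$, is continuous across the interface in the sense $q_{1}^{+}=q_{1}^{-}\circ\sigma$, and satisfies $\langle\nabla_{\eta^{+}}q_{1}^{+}-\nabla_{\eta^{-}}q_{1}^{-}\circ\sigma,\,n\rangle=M$ on $\Gamma$; and $q_{2}:=\Delta_{\eta}^{-1}f$ as the unique solution of $\Delta_{\eta}q_{2}=f$ with zero conormal derivative on $\Gamma^{\pm}$, continuity across $\Gamma$, and \emph{zero} conormal jump on $\Gamma$. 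Item~3 of Lemma~\ref{nonlinear parameter} guarantees that $\Delta_{\eta}^{-1}=(\Delta_{\eta})^{-1}$ is a bona fide solution operator. Adding the two, the sum $q_{1}+q_{2}$ is readily checked to satisfy every line of \eqref{2.16}: the Laplacian equals $0+f=f$, the conormal data on $\Gamma^{\pm}$ equals $0+0=0$ (the correct value by the previous step), continuity across $\Gamma$ holds, and the conormal jump equals $M+0=M$. Uniqueness for \eqref{2.16} then forces $q=q_{1}+q_{2}$, which is \eqref{total pressure}.

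\emph{Main obstacle.} The delicate point is the well-posedness of the transmission problem in the presence of the slip map $\sigma$: the two phases are glued through $q^{+}=q^{-}\circ\sigma$ and the $\sigma$-twisted conormal jump, so the variational formulation and its energy identity must be set up on $\Omega^{+}\cup\Omega^{-}$ with $\sigma$ regarded as a diffeomorphism of $\Gamma$. Uniqueness would follow by testing the homogeneous difference $w$ against itself and integrating by parts in the $\eta$-metric (using $J\equiv1$): the interface contributions from the two sides must cancel thanks to continuity and the vanishing jump, leaving $\nabla_{\eta}w\equiv0$, so $w$ is constant and fixed by the mean-zero normalization of the pressure. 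Verifying that these $\sigma$-composed interface terms genuinely cancel, together with the flux-balance compatibility condition relating $\int_{\Omega}f$ to $\int_{\Gamma}M$ needed for solvability of this Neumann-type problem, is the main technical hurdle.
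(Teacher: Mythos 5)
Your proposal is correct and follows essentially the same route as the paper: split the linear transmission problem \eqref{2.16} by superposition into the $\Delta_{\eta}$-harmonic part carrying the conormal jump $M$ (giving $q_{1}=\mathcal{H}^{\partial}_{\eta}(0,M)$) and the part carrying the bulk source with homogeneous jump data (giving $q_{2}=\Delta_{\eta}^{-1}[\cdots]$), exactly the systems \eqref{decompose1}--\eqref{decompose2}, and conclude \eqref{total pressure} by uniqueness. Your explicit check that the Neumann data of \eqref{2.16} on the fixed walls actually vanishes (via $\eta_{3}\equiv\pm1$ on $\Gamma^{\pm}$ and $(\partial_{x_{2}}\eta_{k})\mathcal{A}^{3}_{k}=\delta_{32}=0$) makes rigorous a step the paper passes over silently when it writes the decomposed systems with homogeneous wall data, while the well-posedness issues you flag (the $\sigma$-twisted interface gluing and the Neumann compatibility condition) are dispatched in the paper by citation to the elliptic theory in \cite{L} rather than argued.
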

	\begin{proof}
	We first decomopse the system \eqref{2.16} into the following two systems
	\begin{equation}\label{decompose1}
	 \mathbf{I}:\left\{\begin{array}{ll}
  	\Delta_{\eta} q=  0 \quad &\text{on}\quad \Omega^{\pm},\\
  		\left\langle\nabla_{\eta} q,(0,1)\right\rangle=0 \quad&\text { on }\quad\Gamma^{\pm},\\
  			q^{+}-q^{-} \circ \sigma=0  \quad &\text {on } \quad \Gamma,\\
  			\left\langle\nabla_{\eta^{+}} q^{+}-\nabla_{\eta^{-}} q^{-} \circ \sigma, n\right\rangle =M(\eta,\partial_{t}{\eta}) \quad &\text {on } \quad \Gamma,
  			\end{array}\right.
	\end{equation}
 and
		\begin{equation}\label{decompose2}
	 \mathbf{II}:\left\{\begin{array}{ll}
	\Delta_{\eta} q=   tr((\nabla_{\eta}(B_{0}\cdot \nabla \eta))^2) -tr((\nabla_{\eta} \partial_{t}{\eta})^2) \quad &\text{on}\quad \Omega^{\pm}+ ,\\
	\left\langle\nabla_{\eta} q,(0,1)\right\rangle=0 \quad&\text { on }\quad\Gamma^{\pm},\\
	q^{+}-q^{-} \circ \sigma=0  \quad &\text {on } \quad \Gamma,\\
	\left\langle\nabla_{\eta^{+}} q^{+}-\nabla_{\eta^{-}} q^{-} \circ \sigma, n\right\rangle =0 \quad &\text {on } \quad \Gamma,
	\end{array}\right.
	\end{equation}
	Indeed, the solution of \eqref{decompose1},\eqref{decompose2} can be defined up to an addictive constant which can be proved by the elliptic estimate as in \cite{L}.
 Since the extension map $\mathcal{E}_{\eta}$ and harmonic map $\mathcal{H}_{\eta}$ are defined in \cite{E1}, page 1183-1285, thus we define $\mathcal{H}^{\partial}_{\eta}$ to be $\mathcal{H}_{\eta} \mathcal{E}_{\eta}$. Now we can see that if a function $f$ satisfy the system:
\begin{equation}\label{define1}
	\left\{\begin{array}{ll}
	\Delta_{\eta}f=  0 \quad &\text{on}\quad \Omega^{\pm},\\
	\left\langle\nabla_{\eta} f,(0,1)\right\rangle=0 \quad&\text { on }\quad\Gamma^{\pm},\\
	f^{+}-f^{-} \circ \sigma=\gamma_1  \quad &\text {on } \quad \Gamma,\\
	\left\langle\nabla_{\eta^{+}}f^{+}-\nabla_{\eta^{-}} f^{-} \circ \sigma, n\right\rangle =\gamma_2 \quad &\text {on } \quad \Gamma.
	\end{array}\right.
	\end{equation}
 Then we can represent the relationship between the boundary data and the solution by $f=\mathcal{H}^{\partial}_{\eta}(\gamma_1,\gamma_2)$.  We call the equations \eqref{decompose1} the harmonic part, and we denote  the solution to  \eqref{decompose1} by 
 $q_{1}$. From \eqref{decompose1}, we know that $\gamma_{1}=0$ and $\gamma_{2}=M$, therefore $q_{1}=\mathcal{H}^{\partial}_{\eta}(0,M)$. At the same time, we denote the solution to  \eqref{decompose2} by $q_{2}$ . Finnaly we deduce that the solution $q$ to system \eqref{2.16} is given by $$q=q_{1}+q_{2}=\mathcal{H}^{\partial}_{\eta}(0,M)+\Delta_{\eta}^{-1}[tr((\nabla_{\eta} \partial_{t}{\eta})^2)+tr((\nabla_{\eta}(B_{0}\cdot \nabla \eta))^2)].$$
	\end{proof}

\begin{lemm}\label{nonlinear parameter}
		Via parameterization of $\eta(t,x)$, we take $\eta(t,s,x):=\eta(t,x)+sv(t,x)+o(s)$, where $o(s)$ is some infinitesimal quantity of s. Then, we apply $\partial_{s}$ to the system $\mathbf{ I}$ and  the system $\mathbf{ II}$ to get  the following  equality:
\begin{equation}
\begin{aligned}
\partial_{s} q&=\partial_{s} q_{1}+\partial_{s} q_{2}\\
&=-\Delta^{-1}_{\eta}[u\cdot\nabla,\Delta]_{\eta}\mathcal{H}^{\partial}_{\eta}(0,M)+ \mathcal{H}^{\partial}_{\eta}(\alpha(\eta ,v,\mathcal{H}^{\partial}_{\eta}(0, M)), \beta(\eta ,v,\mathcal{H}^{\partial}_{\eta}(0, M)))\\
&+\mathcal{H}^{\partial}_{\eta}(0, \partial_{s}M)-\Delta^{-1}_{\eta(t,s)}[u\cdot\nabla,\Delta]_{\eta}\Delta^{-1}_{\eta} [tr((\nabla_{\eta} \partial_{t}{\eta})^2)+tr((\nabla_{\eta}(B_{0}\cdot \nabla \eta))^2)] \\
&+ \Delta^{-1}_{\eta(t,s)} \partial_{s}[tr((\nabla_{\eta}(B_{0}\cdot \nabla \eta))^2) -tr((\nabla_{\eta} \partial_{t}{\eta})^2)] +  \mathcal{H}^{\partial}_{\eta}(\alpha(\eta ,v,q_{2}), \beta(\eta ,v,q_{2})).
\end{aligned}
 \end{equation}	
\end{lemm}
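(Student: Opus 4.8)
The plan is to differentiate the representation formula \eqref{total pressure} term by term along the curve $\eta(t,s,x)=\eta(t,x)+sv(t,x)+o(s)$, writing $u=v$ for the perturbation direction and abbreviating the source of system $\mathbf{II}$ by $F:=tr((\nabla_{\eta}(B_{0}\cdot\nabla\eta))^{2})-tr((\nabla_{\eta}\partial_{t}\eta)^{2})$, so that $q_{1}=\mathcal{H}^{\partial}_{\eta}(0,M)$ and $q_{2}=\Delta_{\eta}^{-1}F$. The only tool needed is the Leibniz rule for an $\eta$-dependent solution operator $\mathcal{L}_{\eta(t,s)}$ acting on $s$-dependent data $g(s)$, namely $\partial_{s}(\mathcal{L}_{\eta(t,s)}g(s))|_{s=0}=(\partial_{s}\mathcal{L}_{\eta(t,s)}|_{s=0})g(0)+\mathcal{L}_{\eta}\,\partial_{s}g(0)$, combined with the operator-differentiation identities (items 1--5 of the preceding lemma), in particular $\partial_{s}\Delta_{\eta(t,s)}|_{s=0}=[u\cdot\nabla,\Delta]_{\eta}$ and $\partial_{s}\Delta^{-1}_{\eta(t,s)}|_{s=0}=([u\cdot\nabla,\Delta^{-1}]-\mathcal{H}(u\cdot\nabla)\Delta^{-1})_{\eta}$.

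First I would treat $q_{2}=\Delta_{\eta}^{-1}F$. Differentiating the interior equation $\Delta_{\eta}q_{2}=F$ and using item 2 gives $\Delta_{\eta}\partial_{s}q_{2}=\partial_{s}F-[u\cdot\nabla,\Delta]_{\eta}q_{2}$; inverting with zero boundary data produces the data-derivative term $\Delta_{\eta(t,s)}^{-1}\partial_{s}F$ together with $-\Delta_{\eta(t,s)}^{-1}[u\cdot\nabla,\Delta]_{\eta}\Delta_{\eta}^{-1}F$, which is exactly the interior content of the commutator identity $[u\cdot\nabla,\Delta^{-1}]_{\eta}=-\Delta_{\eta}^{-1}[u\cdot\nabla,\Delta]_{\eta}\Delta_{\eta}^{-1}$ implicit in item 4. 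Differentiating the \emph{boundary} conditions of $\mathbf{II}$ is not trivial: although $q_{2}$ carries zero boundary data, the jump $q^{+}-q^{-}\circ\sigma$ and the Neumann jump $\langle\nabla_{\eta^{+}}q^{+}-\nabla_{\eta^{-}}q^{-}\circ\sigma,n\rangle$ involve the $\eta$-dependent objects $\sigma$, $n$ and $\nabla_{\eta}$, so their $s$-derivatives generate nonzero data built from $\partial_{s}\sigma$, $\partial_{s}n$ (as in \eqref{2.22}) and $\partial_{s}\nabla_{\eta}$ evaluated on $q_{2}$. Collecting these into the pair $\alpha(\eta,v,q_{2})$, $\beta(\eta,v,q_{2})$ and lifting them by the harmonic solution operator yields the correction $\mathcal{H}^{\partial}_{\eta}(\alpha(\eta,v,q_{2}),\beta(\eta,v,q_{2}))$, which accounts for the harmonic projection $\mathcal{H}(u\cdot\nabla)\Delta^{-1}$ in item 4 and completes $\partial_{s}q_{2}$.

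Next I would treat $q_{1}=\mathcal{H}^{\partial}_{\eta}(0,M)$. Since $\Delta_{\eta}q_{1}=0$, differentiation gives $\Delta_{\eta}\partial_{s}q_{1}=-[u\cdot\nabla,\Delta]_{\eta}q_{1}$, whose zero-data particular solution is the term $-\Delta_{\eta}^{-1}[u\cdot\nabla,\Delta]_{\eta}\mathcal{H}^{\partial}_{\eta}(0,M)$. The boundary data then splits into two parts: differentiating the conditions $q^{+}=q^{-}\circ\sigma$ and $\langle\nabla_{\eta^{+}}q^{+}-\nabla_{\eta^{-}}q^{-}\circ\sigma,n\rangle=M$ produces, from $\partial_{s}\sigma$, $\partial_{s}n$ and $\partial_{s}\nabla_{\eta}$ acting on $q_{1}$, the jump/Neumann corrections $\alpha(\eta,v,\mathcal{H}^{\partial}_{\eta}(0,M))$ and $\beta(\eta,v,\mathcal{H}^{\partial}_{\eta}(0,M))$, while the genuine variation of the data contributes $\partial_{s}M$; lifting these harmonically gives $\mathcal{H}^{\partial}_{\eta}(\alpha,\beta)+\mathcal{H}^{\partial}_{\eta}(0,\partial_{s}M)$. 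Adding $\partial_{s}q_{1}$ and $\partial_{s}q_{2}$ reproduces the stated six-term identity, with uniqueness for the two-phase elliptic problems (up to an additive constant, as in the previous lemma) guaranteeing that this is the decomposition obtained by differentiating \eqref{total pressure}.

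The hard part will be the boundary analysis, i.e.\ pinning down the precise form of $\alpha$ and $\beta$. This requires differentiating the genuinely $\eta$-dependent boundary operators---the Lagrangian gradient $\nabla_{\eta}$ in the Neumann jump, the slip map $\sigma$, and the normal $n=J\mathcal{A}^{T}e_{3}$---and evaluating them on the \emph{already constructed} pressures $q_{1}$ and $q_{2}$, then checking that the resulting boundary data are admissible so that $\mathcal{H}^{\partial}_{\eta}=\mathcal{H}_{\eta}\mathcal{E}_{\eta}$ applies. One must also verify that the harmonic projection appearing in $\partial_{s}\Delta_{\eta(t,s)}^{-1}$ (item 4) is exactly absorbed into the $\mathcal{H}^{\partial}_{\eta}(\alpha,\beta)$ corrections, so that no interior term is double-counted and the interior/boundary split is self-consistent.
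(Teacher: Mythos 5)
Your proposal is correct and follows essentially the same route as the paper: differentiate each of the two elliptic systems $\mathbf{I}$ and $\mathbf{II}$ in $s$, use the operator identities of the preceding lemma for the interior commutator terms, and split the resulting inhomogeneous problem by superposition into a zero-boundary-data part solved by $\Delta_{\eta}^{-1}$ plus harmonic lifts $\mathcal{H}^{\partial}_{\eta}$ of the boundary data generated by $\partial_{s}\sigma$, $\partial_{s}n$, $\partial_{s}\nabla_{\eta}$ and $\partial_{s}M$ (the paper writes this superposition explicitly as the subsystems $\mathbf{I'}$-1,2,3 and $\mathbf{II'}$-1,2). The only thing the paper adds beyond your outline is the explicit formulas $\alpha=\nabla_{\partial_{s}\sigma}q^{-}\circ\sigma$ and the corresponding expression for $\beta$, which is exactly the boundary computation you flagged as the remaining work.
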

\begin{proof}
To compute $\partial_{s} q_{1}=\partial_{s}\mathcal{H}^{\partial}_{\eta}(0, M)$, we need to parameterize \eqref{decompose1}, apply $\partial_{s}$ to the system $\mathbf{ I}$  and then taking $s=0$. Thus we have
 	\begin{equation}\label{F-decompose1}
  \mathbf{I^{\prime}}:\left\{\begin{array}{ll}
 \Delta_{\eta}\partial_{s}q_{1}=-[u\cdot\nabla,\Delta]_{\eta}\mathcal{H}^{\partial}_{\eta}(0, M)  &\text{on}\quad \Omega^{\pm}\\
  \partial_{s}q^{+}_{1}- \partial_{s}q^{-}_{1} \circ \sigma=\alpha(\eta ,v,\mathcal{H}^{\partial}_{\eta}(0, M))  &\text {on } \quad \Gamma,\\
 \left\langle\nabla_{\eta^{+}} \partial_{s}q^{+}_{1}-\nabla_{\eta^{-}} \partial_{s}q^{-}_{1} \circ \sigma, n\right\rangle \\
 =\partial_{s}M+\beta(\eta ,v,\mathcal{H}^{\partial}_{\eta}(0, M))  &\text {on } \quad \Gamma,
 \end{array}\right.
 \end{equation}
where 
 \begin{align}
\alpha(\eta ,v,\mathcal{H}^{\partial}_{\eta}(0, M))=\nabla_{\!\partial_{s}\sigma} q^{-}_{1} \circ \sigma\notag,
\end{align}
\begin{align}
&\quad\beta(\eta ,v,\mathcal{H}^{\partial}_{\eta}(0, M))\\\notag&= \left\langle\nabla_{\eta^{-}} \nabla_{\partial_{s}\sigma}q^{-}_{1}-[u^{+}\cdot\nabla,\nabla]_{\eta^{+}}q^{+}_{1}+[u^{-}\cdot\nabla,\nabla]_{\eta^{-}}q^{-}_{1}, n\right\rangle\\\notag
&-\left\langle\nabla_{\eta^{+}} q^{+}_{1}-\nabla_{\eta^{-}} q^{-}_{1}, \partial_{s} n\right\rangle.
\end{align}
To find the solution, we decompose \eqref{F-decompose1} into the following three subsystems:
\begin{equation}
 \mathbf{I^{\prime}-1}: \left\{\begin{array}{ll}
\Delta_{\eta}\partial_{s}q_{11}=-[u\cdot\nabla,\Delta]_{\eta}\mathcal{H}^{\partial}_{\eta}(0, M)  &\text{on}\quad \Omega\\
\partial_{s}q^{+}_{11}- \partial_{s}q^{-}_{11} \circ \sigma=0 &\text {on } \quad \Gamma,\\
\left\langle\nabla_{\eta^{+}} \partial_{s}q^{+}_{11}-\nabla_{\eta^{-}} \partial_{s}q^{-}_{11} \circ \sigma, n\right\rangle =0  &\text {on } \quad \Gamma,
\end{array}\right.
\end{equation}
and
\begin{equation}
 \mathbf{I^{\prime}-2}: \left\{\begin{array}{ll}
\Delta_{\eta}\partial_{s}q_{12}=0  &\text{on}\quad \Omega\\
\partial_{s}q^{+}_{12}- \partial_{s}q^{-}_{12} \circ \sigma=\alpha &\text {on } \quad \Gamma,\\
\left\langle\nabla_{\eta^{+}} \partial_{s}q^{+}_{12}-\nabla_{\eta^{-}} \partial_{s}q^{-}_{12} \circ \sigma, n\right\rangle =\beta  &\text {on } \quad \Gamma,
\end{array}\right.
\end{equation}
and
\begin{equation}
  \mathbf{I^{\prime}-3}:\left\{\begin{array}{ll}
\Delta_{\eta}\partial_{s}q_{13}=0  &\text{on}\quad \Omega\\
\partial_{s}q^{+}_{13}- \partial_{s}q^{-}_{13} \circ \sigma=0 &\text {on } \quad \Gamma,\\
\left\langle\nabla_{\eta^{+}} \partial_{s}q^{+}_{13}-\nabla_{\eta^{-}} \partial_{s}q^{-}_{13} \circ \sigma, n\right\rangle =\partial_{s}M &\text {on } \quad \Gamma,
\end{array}\right.
\end{equation}
Then the solution $q$ to system \eqref{2.16} is given by 
\begin{equation}
\begin{aligned}
\partial_{s} q_{1}:&=q_{11}+q_{12}+q_{13}\\
&=-\Delta^{-1}_{\eta}[u\cdot\nabla,\Delta]_{\eta}\mathcal{H}^{\partial}_{\eta}(0,M)+ \mathcal{H}^{\partial}_{\eta}(\alpha(\eta ,v,\mathcal{H}^{\partial}_{\eta}(0, M)), \beta(\eta ,v,\mathcal{H}^{\partial}_{\eta}(0, M)))\\
&+\mathcal{H}^{\partial}_{\eta}(0, \partial_{s}M).
\end{aligned}
 \end{equation}
To compute $\partial_{s}q_{2}=\partial_{s} \Delta^{-1}_{\eta} [tr((\nabla_{\eta} \partial_{t}{\eta})^2)+tr((\nabla_{\eta}(B_{0}\cdot \nabla \eta))^2)]$, we need to parameterize \eqref{decompose2}, apply $\partial_{s}$ to the system $\mathbf{ II}$  and then taking $s=0$. Thus we have
 	\begin{equation}\label{F-decompose2}
  \mathbf{II^{\prime}}:\left\{\begin{array}{ll}
 \Delta_{\eta}\partial_{s}q_{2}=-[u\cdot\nabla,\Delta]_{\eta} \Delta^{-1}_{\eta(t,s)} [tr((\nabla_{\eta} \partial_{t}{\eta})^2)+tr((\nabla_{\eta}(B_{0}\cdot \nabla \eta))^2)] \\
 + \partial_{s}[tr((\nabla_{\eta}(B_{0}\cdot \nabla \eta))^2) -tr((\nabla_{\eta} \partial_{t}{\eta})^2)]  &\text{on}\quad \Omega^{\pm}\\
  \partial_{s}q^{+}_{2}- \partial_{s}q^{-}_{2} \circ \sigma=\alpha &\text {on } \quad \Gamma,\\
 \left\langle\nabla_{\eta^{+}} \partial_{s}q^{+}_{2}-\nabla_{\eta^{-}} \partial_{s}q^{-}_{2} \circ \sigma, n\right\rangle =\beta  &\text {on } \quad \Gamma.
 \end{array}\right.
 \end{equation}
 As before, we decompose \eqref{F-decompose2} into the following systems,
\begin{equation}
 \mathbf{II^{\prime}-1}: \left\{\begin{array}{ll}
\Delta_{\eta}\partial_{s}q_{21}=-[u\cdot\nabla,\Delta]_{\eta}\Delta^{-1}_{\eta} [tr((\nabla_{\eta} \partial_{t}{\eta})^2)+tr((\nabla_{\eta}(B_{0}\cdot \nabla \eta))^2)] \\
+ \partial_{s}[tr((\nabla_{\eta}(B_{0}\cdot \nabla \eta))^2) -tr((\nabla_{\eta} \partial_{t}{\eta})^2)]  &\text{on}\quad \Omega^{\pm}\\
\partial_{s}q^{+}_{21}- \partial_{s}q^{-}_{21} \circ \sigma=0 &\text {on } \quad \Gamma,\\
\left\langle\nabla_{\eta^{+}} \partial_{s}q^{+}_{21}-\nabla_{\eta^{-}} \partial_{s}q^{-}_{21} \circ \sigma, n\right\rangle =0  &\text {on } \quad \Gamma,
\end{array}\right.
\end{equation}
where 
\begin{equation}
\begin{aligned}
&\partial_{s}[tr((\nabla_{\eta}(B_{0}\cdot \nabla \eta))^2) -tr((\nabla_{\eta} \partial_{t}{\eta})^2)] \\
&=-2\nabla_{\!\eta}\Delta_{\eta}^{-1}\big(tr(\nabla_{\!\eta}\eta_{t}\cdot\partial_{s}D_{\eta}\eta_{t}|_{s=0})\\
&+tr(D_{\eta}(H_0\cdot\nabla\eta)\partial_{s}D_{\eta}(H_0\cdot\nabla\eta)|_{s=0})\big)
\end{aligned}
\end{equation}
and
\begin{equation}
 \mathbf{II^{\prime}-2}: \left\{\begin{array}{ll}
\Delta_{\eta}\partial_{s}q_{22}=0  &\text{on}\quad \Omega^{\pm}\\
\partial_{s}q^{+}_{22}- \partial_{s}q^{-}_{22} \circ \sigma=\alpha &\text {on } \quad \Gamma,\\
\left\langle\nabla_{\eta^{+}} \partial_{s}q^{+}_{22}-\nabla_{\eta^{-}} \partial_{s}q^{-}_{22} \circ \sigma, n\right\rangle =\beta  &\text {on } \quad \Gamma,
\end{array}\right.
\end{equation}
Then the solution $q_{2}$ to system \eqref{2.16} is given by 
\begin{equation}
\begin{aligned}
\partial_{s} q_{2}:&=q_{21}+q_{22}\\
&=-\Delta^{-1}_{\eta(t,s)}[u\cdot\nabla,\Delta]_{\eta}\Delta^{-1}_{\eta} [tr((\nabla_{\eta} \partial_{t}{\eta})^2)+tr((\nabla_{\eta}(B_{0}\cdot \nabla \eta))^2)] \\
&+ \Delta^{-1}_{\eta(t,s)} \partial_{s}[tr((\nabla_{\eta}(B_{0}\cdot \nabla \eta))^2) -tr((\nabla_{\eta} \partial_{t}{\eta})^2)] \\
&+  \mathcal{H}^{\partial}_{\eta}(\alpha(\eta ,v,q_{2}), \beta(\eta ,v,q_{2})).
\end{aligned}
 \end{equation}
\end{proof}
  
	\begin{lemm}\label{lem:nonlinear}
  Suppose $v(t)=\left.\partial_{s} \eta(t, s)\right|_{s=0}$,  $u(t)=v(t) \circ \eta(t)^{-1}$, we have 
  \begin{equation*}
  	\|D^2Z(\eta,\partial_{t}{\eta})(v,\partial_{t}{v})(v,\partial_{t}{v})\|_{s}\le  C(\|(\eta,\partial_{t}{\eta})\|_{s+3})\|(v,\partial_{t}{v})\|_{s+2}\|(v,\partial_{t}{v})\|_{s},
  \end{equation*}
  where $\|. \|_{s}$ denotes the $H^{s}$-norm on $\Omega$.
	\end{lemm}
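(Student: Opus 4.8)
The plan is to compute $D^2 Z$ explicitly by differentiating the map $Z$ twice along the parametrized family $\eta(t,s,x)=\eta(t,x)+s\,v(t,x)+o(s)$ and then to estimate each resulting term by tame product and elliptic estimates, following the framework of \cite{E2}. Writing $Z = Z_m - \nabla_\eta q$ with the magnetic part $Z_m := (H_0\cdot\nabla\eta)\cdot\nabla_\eta(H_0\cdot\nabla\eta)$ and the pressure $q$ given by the decomposition \eqref{total pressure}, I would treat the two pieces separately. For $Z_m$ the dependence on $\eta$ enters through the operator $\nabla_\eta=\mathcal{A}\nabla$ with $\mathcal{A}=(\nabla\eta)^{-1}$ and through the explicit factors $H_0\cdot\nabla\eta$, so that $\partial_s Z_m$ and $\partial_s^2 Z_m$ are produced by the commutator identity $\partial_s\nabla_{\eta(t,s)}|_{s=0}=[u\cdot\nabla,\nabla]_\eta$ of the operator-calculus lemma, where $u=v\circ\eta^{-1}$; each $\partial_s$ thus contributes exactly one factor $v$ carrying one spatial derivative. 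For the pressure part I would reuse the formula for $\partial_s q=\partial_s q_1+\partial_s q_2$ already assembled in the preceding lemma and differentiate it once more, using $\partial_s\Delta^{-1}_{\eta(t,s)}|_{s=0}=([u\cdot\nabla,\Delta^{-1}]-\mathcal{H}(u\cdot\nabla)\Delta^{-1})_\eta$ together with items 1,2,5 of that lemma and Lemma \ref{lem:lemm1}.

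The outcome of this computation is a finite sum of terms, each a product of a coefficient built from $\eta$, $\partial_t\eta$ and the solution operators $\Delta_\eta^{-1}$, $\mathcal{H}^{\partial}_{\eta}$ acting on data at most second order in $\eta$, multiplied by two factors drawn from $(v,\partial_t v)$ and their derivatives, one factor contributed by each application of $\partial_s$. I would then estimate these in $H^s(\Omega)$ using three ingredients: (i) the Moser-type tame product estimate $\|fg\|_s\lesssim\|f\|_s\|g\|_{L^\infty}+\|f\|_{L^\infty}\|g\|_s$ and its multilinear analogues, valid for $s$ large enough that $H^s$ is an algebra; (ii) elliptic regularity for the two-phase problems, namely that $\Delta_\eta^{-1}$ and $\mathcal{H}^{\partial}_{\eta}$ gain two derivatives with operator norms depending tamely on $\|\eta\|$; and (iii) composition and inverse estimates controlling $u=v\circ\eta^{-1}$ and $\mathcal{A}=(\nabla\eta)^{-1}$ by $\|\eta\|_{s+1}$ and $\|v\|_s$. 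The commutators $[u\cdot\nabla,\nabla]$, $[u\cdot\nabla,\Delta]$, $[u\cdot\nabla,\Delta^{-1}]$ are, respectively, first order and lower order in $u$, so each genuinely consumes only one derivative of the corresponding $v$-factor.

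The derivative bookkeeping is the heart of the matter. Because $Z$ loses exactly two derivatives — the magnetic term is second order in $\eta$, while in the pressure term the two-derivative gain of $\Delta_\eta^{-1}$ and $\mathcal{H}^{\partial}_{\eta}$ exactly balances the order-two right-hand sides $\mathrm{tr}((\nabla_\eta\partial_t\eta)^2)$, $\mathrm{tr}((\nabla_\eta(H_0\cdot\nabla\eta))^2)$ and the order-two boundary datum $M$ — the bilinear expression $D^2 Z(v,\partial_t v)(v,\partial_t v)$ has, in each summand, a total of two derivatives to distribute over the two $(v,\partial_t v)$ factors after the elliptic gain. Applying (i) I would always place the top derivatives on a single factor, yielding the worst admissible distribution $\|(v,\partial_t v)\|_{s+2}\,\|(v,\partial_t v)\|_{\mathrm{low}}$, and absorb the low norm into $\|(v,\partial_t v)\|_s$ by Sobolev embedding; the remaining $\eta$ and $\partial_t\eta$ dependence, including the one extra derivative that in the worst term lands on a coefficient, is collected into the tame constant $C(\|(\eta,\partial_t\eta)\|_{s+3})$, the $+3$ accounting for the two lost derivatives plus this one coefficient derivative.

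The main obstacle is the nonlocal pressure term and, within it, the boundary datum $M$ and its $s$-derivatives. Differentiating $\partial_s q$ a second time forces one through $\partial_s\Delta^{-1}_{\eta(t,s)}$ and $\partial_s\mathcal{H}^{\partial}_{\eta}$, whose expressions involve compositions of $\Delta_\eta^{-1}$ with commutators and with the harmonic projector $\mathcal{H}$; since $M$, $\partial_s M$ and $\partial_s^2 M$ carry the highest-order traces on the free surface $\Gamma$, I must verify that the elliptic gain and the trace theorem compensate these exactly, so that no $v$-factor ever exceeds $s+2$ derivatives and no $\eta$-factor exceeds $s+3$. Establishing this clean order count through the repeated nonlocal operators — rather than the product estimates themselves, which are routine once the orders are fixed — is the delicate step, and it is where the smoothing properties of $\Delta_\eta^{-1}$ and $\mathcal{H}^{\partial}_{\eta}$ must be used with care.
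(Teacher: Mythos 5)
Your proposal follows essentially the same route as the paper's own proof: decompose $Z$ into the magnetic part and the pressure part via the formula \eqref{total pressure}, differentiate along the parametrization $\eta(t,s,x)=\eta(t,x)+sv(t,x)+o(s)$ using the operator-calculus lemmas (the commutator identities for $\nabla_\eta$, $\Delta_\eta$, $\Delta^{-1}_\eta$) together with the preceding lemma's formula for $\partial_s q$, observe that the elliptic gain makes the pressure contribution effectively zeroth order so that $D^2Z$ is second order in each argument $(v,\partial_t v)$, and conclude by Sobolev/Moser product estimates. If anything, your bookkeeping of where the derivatives land and of the nonlocal boundary datum $M$ is more explicit than the paper's, which compresses these steps into the assertion that $\partial_s q$ is zeroth order and an appeal to ``the usual Sobolev estimates.''
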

\begin{proof}
	 By \eqref{total pressure}, we get
	\begin{align}\label{0oder}
	\quad Z(\eta,\partial_{t}{\eta})=&(H_{0} \cdot \nabla \eta) \cdot \nabla_{\eta}(H_{0} \cdot \nabla \eta)-\nabla_{\eta}q\\\notag
	=&(H_{0} \cdot \nabla \eta) \cdot \nabla_{\eta}(H_{0} \cdot \nabla \eta)-\nabla_{\eta}\mathcal{H}^{\partial}_{\eta}(0, M)\\\notag
 &-\nabla_{\eta}\Delta_{\eta}^{-1}tr((\nabla_{\eta} \partial_{t}{\eta})^2)+\nabla_{\eta}\Delta_{\eta}^{-1}tr((\nabla_{\eta}(H_{0}\cdot \nabla \eta))^2).
	\end{align}
	We need to compute $DZ(\eta,\partial_{t}{\eta})(v,\partial_{t}{v})$ firstly. To do it, we need to parameterize $\eta=\eta(t,x)$ by $\eta=\eta(t,s,x)$ which satisfies $\eta(t,s,x)|_{s=0}=\eta(t,x):=\eta$ and $\partial_{s}\eta(t,s,x)|_{s=0}=v$, i.e, we take $\eta(t,s,x)=\eta(t,x)+sv(t,x)+o(s)$ with $o(s)$ meaning some infinitesimal quantity of s. Then
taking using of lemma 3.1-3.3,  we find that 
	\begin{equation}\label{parameterization of first derivative}
	\begin{aligned}
	&\quad DZ(\eta,\partial_{t}\eta)(v,\partial_{t}{v})\\
	&=\partial_{s}Z(\eta(t,s),\partial_{t}\eta(t,s))|_{s=0}\\
 &=(H_{0} \cdot \nabla v) \cdot \nabla_{\eta}(H_{0} \cdot \nabla \eta)+(H_{0} \cdot \nabla \eta) \cdot [u\cdot\nabla,\nabla]_{\eta} (H_{0} \cdot \nabla \eta)  \\
		&+(H_{0} \cdot \nabla \eta) \cdot \nabla_{\eta}(H_{0} \cdot \nabla v) -
  [u\cdot\nabla,\nabla]_{\eta} q-\nabla_{\!\eta}\partial_s q.
 \end{aligned}
	\end{equation}
where the  term $\nabla\partial_s q$ in \eqref{parameterization of first derivative} can be estimated by using the lemma \eqref{nonlinear parameter}.

By the above computation and lemma \eqref{nonlinear parameter}, we see that $\partial_{s} q$ is zeroth order in $(v, \partial_{t}{v})$, therefore  we deduce that $D Z(\eta, \partial_{t}{\eta})(v, \partial_{t}{v})$ is a second order differential operator in $(v, \partial_{t}{v})$. Similarly we find that $D^{2} Z$ is also second order differential operator in each $(v, \partial_{t}{v})$. From this, using the usual Sobolev estimates, we find for $s>1$ that 
	 $$
	 \left\|D^{2} Z(\eta, \partial_{t}{\eta})(v, \partial_{t}{v})\right\|_{s} \leq C(\|(\eta, \partial_{t}{\eta})\|_{s+3})\|(v, \partial_{t}{v})\|_{s+2}\|(v, \partial_{t}{v})\|_{s},
	 $$
	 where $C(\|(\eta, \partial_{t}{\eta})\|_{s+3})$ is uniform for all $(\eta, \partial_{t}{\eta})$ near $(\zeta, \partial_{t}{\zeta})$ .
\end{proof}

\subsection{Proof of main theorem.}
\quad \quad We are now in a position to prove  ill-posedness for the nonlinear problem.  To see ill-posedness,  we first decompose $\chi_{n}(t)=\eta_{n}(t)-\xi(t)$ into harmonic part $h$ and an remaining part $r$ as follows:
	\begin{equation} \label{3.10}
	\chi_{n}= \nabla h+r,
	\end{equation}
	where $h$ satisfy
	\begin{equation} \label{3.11}
	\begin{array}{rll}
	\Delta h=0 & \text { in } & \Omega^{ \pm} \\
	\partial_{x_{3}} h=0 & \text { on } & \Gamma^{ \pm}  \\
	\chi_{n,3}=\partial_{x_{3}} h & \text {on } &  \Gamma,
	\end{array}
	\end{equation}
 with $\chi_{n,3}$ meaning the third component of vector $\chi_{n}$
	and $r$ satisfies
	\begin{equation}\label{3.12}
	r_{3}=0 \quad \text {on } ~~ \Gamma ~~\text {and }  ~~\Gamma^{ \pm}.
	\end{equation}
	
	To solve \eqref{3.11}, similar to Ebin \cite{E2},  we let $h=f+g$ where $f$ and $g$ are harmoic functions defined by:
 	\begin{equation} \label{3.111}
	\begin{array}{rll}
	\partial_{x_{3}}f=\partial_{x_{3}}g=0 &  \text { on } & \Gamma^{ \pm}, \\
	\partial_{x_{3}}f^{+}=\partial_{x_{3}}f^{-} & \text { on } & \Gamma,  \\
	g^{+}=g^{-} & \text {on } &  \Gamma.
	\end{array}
	\end{equation}
Thus we can see that  $g$ is the even part of $h$ in $x_{3}$ and $f$ is the odd part.  Since $f$ is harmonic and satisfies \eqref{3.111} , it is  a linear combination of functions of the following  form:
\begin{equation}\label{3.13}
	f_{j}=P a\left\{\begin{array}{ll}
	e^{i j(x_{1}+t)}(\sinh j x_{3}-\operatorname{coth} j  \cosh j x_{3}) & x_{3} \geq 0 \\
	e^{i j(x_{1}-t)}(\sinh j x_{3}+\operatorname{coth} j  \cosh j x_{3}) & x_{3}<0.
	\end{array}\right\}
	\end{equation}
Similarly, we can see that  $g$  is  a linear combination of functions of the following  form:
\begin{equation}\label{3.133}
	g_{j}=P a\left\{\begin{array}{ll}
	e^{i j(x_{1}+t)}(-\sinh j x_{3}+\operatorname{coth} j  \cosh j x_{3}) & x_{3} \geq 0 \\
	e^{i j(x_{1}-t)}(\sinh j x_{3}+\operatorname{coth} j  \cosh j x_{3}) & x_{3}<0.
	\end{array}\right\}
	\end{equation}
 
For the odd part $f$ we further decompose it into  high frequencies $\mathcal{P}$ and low frequencies $\mathcal{L}$, more precisely,  let    $\mathcal{P}$ be the linear combination of $f_{j}$ for $j \geq n$, and let $\mathcal{L}$ be the part with $j<n$. Therefore, we decompose the perturb quantity $\chi_{n}$  into fourth part:
	\begin{equation}\label{3.14}
	\chi_{n}=\nabla \mathcal{P}_n +\nabla \mathcal{L}_n+ \nabla g_{n}+r_n,
	\end{equation}

For the simplicity of notation, we ignore the subscripts $n$ of $\mathcal{P}_n,\nabla \mathcal{L}_n, \nabla g_{n}, r_n$ in the following part of this paper.
	By this definition, we verify that the summands of \eqref{3.14} are orthogonal with respect to the $L^{2}(\Omega)$:
	\begin{equation}\label{3.15}
	\begin{array}{rll}
	\int_{\Omega} \left\langle \nabla h, r\right\rangle =& \int_{\Omega} \left\langle \nabla h, \chi_{n}\right\rangle- \int_{\Omega} \left\langle \nabla h, \nabla h\right\rangle\\
	=& -\int_{\Omega} \left\langle  h, \mathrm{div} \chi_{n}\right\rangle+ \int_{\partial \Omega} h \left\langle \chi_{n},\nu\right\rangle\\
	&+ \int_{\Omega} \left\langle \Delta h, h\right\rangle
	- \int_{\partial \Omega} h \left\langle \nabla h,\nu\right\rangle\\
	=&0,
	\end{array}
	\end{equation}
where we use the fact that $\chi_{n,2}(x_2=\pm 1)=\eta_{n,3}(x_3=\pm 1)-\xi_2(x_3=\pm 1)=0$ with $\eta_3$ meaning the second component of vector $\eta$ and $\nu$ is the unit out normal vector. Meanwhile, note that $\nabla \mathcal{P}$ and $\nabla \mathcal{L}$  belong to the space $T_{\eta}M^{s}$ while $\nabla g$ is perpendicular to the tangent space  $T_{\eta}M^{s}$.  Moreover, similar to (5.19) and (5.20) in \cite{E1}, we can also show that $\nabla\mathcal{P}$ and $\nabla \mathcal{L}$ are orthogonal.

Next,  we turn  to derive an equation for each of the summands for $\chi_{n}$, to do this, we  compute $D Z$ applied to every part of $\chi_{n}$.

For the remaining part $r$, by the definition of  $r$, we deduce  $r_{3}|_{x_{3}=0}=0$,  substituting  this identity into system \eqref{2.22}, it follows that $\partial_{s} q=0$. Therefor we have  $D Z(r, \partial_{t}{r})= k \partial^{2}_{x_{2}} v-\nabla\partial_s q= k \partial^{2}_{x_{2}} v$, where $k$ is defined as follows:
\begin{equation}
	k=\left \{\begin{array}{ll}
	a^{2} & on ~~~ \Omega^{+}  \\
	b^{2} & on ~~~ \Omega^{-} .
	\end{array} \right \}
	\end{equation}
	
	To compute $D Z(\nabla f, \partial_{t}{\nabla f})$, following the computation  in section 2, it follows that : $D Z(\nabla f, \partial_{t}{\nabla f})=j^{2} \nabla f:=  j^{2} \nabla f $.  Let us define an operator $A$  on $\Omega_{ \pm}$ as follows: If $f_{j}$ is as in \eqref{3.13}, let $A \nabla f_{j}=j^{2} \nabla f_{j}$, then $D Z(\nabla f, \nabla\partial_{t}{f})= A \nabla f $. 
 
 It remains to compute $D Z(\nabla g, \partial_{t}{\nabla g})$, let $v=\nabla g$, taking use of \eqref{3.133}, we have  $\partial_{x_1} \partial_t v^{+}_{3} + \partial_{x_1} \partial_t v^{-}_{3}\circ \sigma + \frac{1}{2}(a^2\partial^{2}_{x_2}v^+_3+b^2\partial^{2}_{x_2}v^-_3\circ \sigma)=2j^{2} \partial_{x_2} g_{j}$, then $D Z(\nabla g, \nabla\partial_{t}{g})=-2 A \nabla g $. This completes the computation of $D Z$.

	For convenience, we define $Q$ to be  the double integral term,    \eqref{double integration} can  be rewritten as  
	
	\begin{equation} \label{3.16}
	\partial_{t}^{2} {\chi_{n}}=D Z(\eta, \partial_{t} {\eta})(\chi_{n}, \partial_{t}\chi_{n})+Q
	\end{equation}
	and using \eqref{3.14} we decompose $Q$ into $Q_{1}+Q_{2}+Q_{3}+Q_{4}$. Thus we have
	
	\begin{equation}\label{3.17}
	\begin{aligned}
	& \partial_{t}^{2}\nabla \mathcal{P}= A\nabla \mathcal{P}+Q_{1}, \\
	& \partial_{t}^{2}\nabla \mathcal{L}= A \nabla \mathcal{L}+Q_{2}, \\
    &\partial_{t}^{2}\nabla g=-2 A \nabla g+Q_{3},\\
    &\partial_{t}^{2} r=-k A r+ Q_{4} \text {. }
	\end{aligned}
	\end{equation}

	Now we are going to estimate the growth of the solutions to above equations  \eqref{3.17}, to do this, we  introduce some functionals which can be used to estimate its growth:
	\begin{equation}\label{3.19}
	\begin{aligned}
	E_{\mu}^{ \pm} & =\left\|A^{\frac{\mu}{2}} \partial_{t} \nabla\mathcal{P} \pm   A^{\frac{\mu+1}{2}} \nabla \mathcal{P}\right\|^{2}_{0} \\
	E_{\mu} & =E_{\mu}^{+}+E_{\mu}^{-} \\
	G & =\|\partial_{t} \nabla \mathcal{L}\|^{2}_{0}+\| A^{\frac{1}{2}} \nabla \mathcal{L}\|^{2}_{0},
	\end{aligned}
	\end{equation}
	and
	\begin{equation}\label{3.20}
	\begin{aligned}
	F=\|\partial_{t} \nabla g\|^{2}_{0}+\| A^{\frac{1}{2}} \nabla g\|^{2}_{0}+\|\partial_{t} r\|^{2}_{0}+\|k^{\frac{1}{2}} A^{\frac{1}{2}} r\|^{2}_{0},
	\end{aligned}
	\end{equation}
	where $"\| \|_{0}"$ means the $L^{2}$-norm on $\Omega$ and $"\| \|_{s}"$ denotes the $H^{s}$-norm on $\Omega$.  From the definitions of $E$ and $A$, we can directly calculate these inequalities: $E_{\mu}^{ \pm} \geq n^{2(\mu-\nu)} E_{\nu}^{ \pm}$, and $\left\|A^{\mu} \nabla \mathcal{L}\right\|_{0} \leq(n-1)^{2 \mu}\|\nabla \mathcal{L}\|_{0}$.
	
	\begin{prop} 
		For sufficiently large $n$, the set of sufficiently small $z$ such that $E_{1}^{+} \geq E_{1}^{-}, E_{1}^{+} \geq n^{3} F$, and $E_{1}^{+} \geq n^{3} G$ is invariant under the evolution defined by \eqref{3.19} and \eqref{3.20}.
	\end{prop}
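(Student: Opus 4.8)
The plan is to read this as a positive-invariance (barrier) statement and prove it by a continuity argument. Writing the region as the intersection of the three ``slacks'' $E_1^+-E_1^-\ge 0$, $E_1^+-n^3F\ge 0$ and $E_1^+-n^3G\ge 0$, it suffices to show that on each boundary face---where one slack vanishes while $z$ is still small---the time derivative of that slack is strictly positive; then no trajectory starting inside can cross a face outward, and the region is invariant for as long as $z$ stays small. Throughout I use that $A$ acts as multiplication by $j^2$ on the $j$-th mode, so $A^{1/2}\ge n$ on the high-frequency block $\mathcal P$ ($j\ge n$) and $A^{1/2}\le n-1$ on the low-frequency block $\mathcal L$ ($j<n$), together with the elementary bounds $E_\mu^\pm\ge n^{2(\mu-\nu)}E_\nu^\pm$ and $\|A^\mu\nabla\mathcal L\|_0\le(n-1)^{2\mu}\|\nabla\mathcal L\|_0$ recorded after \eqref{3.20}.

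First I would diagonalize the unstable block by setting $r^\pm:=\partial_t\nabla\mathcal P\pm A^{1/2}\nabla\mathcal P$, so that \eqref{3.19} reads $E_1^\pm=\|A^{1/2}r^\pm\|_0^2$ and the first line of \eqref{3.17} becomes $\partial_t r^\pm=\pm A^{1/2}r^\pm+Q_1$. Differentiating gives
\begin{equation*}
\tfrac{d}{dt}E_1^\pm=\pm 2\|A^{3/4}r^\pm\|_0^2+2\langle Ar^\pm,Q_1\rangle ,
\end{equation*}
whence, using $A^{1/4}\ge n^{1/2}$ on $\mathcal P$ and Cauchy--Schwarz on the last term,
\begin{equation*}
\tfrac{d}{dt}E_1^+\ge 2nE_1^+-2(E_1^+)^{1/2}\|A^{1/2}Q_1\|_0,\qquad \tfrac{d}{dt}E_1^-\le -2nE_1^-+2(E_1^-)^{1/2}\|A^{1/2}Q_1\|_0 .
\end{equation*}
For the remaining functionals I would differentiate \eqref{3.20} along the last three lines of \eqref{3.17}. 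The $g$- and $r$-equations $\partial_t^2\nabla g=-2A\nabla g+Q_3$ and $\partial_t^2 r=-kAr+Q_4$ are stable, so the corresponding summands of $F$ are equivalent to conserved energies whose time derivative is purely nonlinear; hence $F$ carries no destabilizing linear growth, $\tfrac{d}{dt}F\le(\text{nonlinear})$. For the low block, $A^{1/2}\le n-1$ yields $\tfrac{d}{dt}G\le 2(n-1)G+(\text{nonlinear})$.

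With these inequalities the linear margins on the three faces are all strictly positive of the right order: on $\{E_1^+=E_1^-\}$ one gets $\tfrac{d}{dt}(E_1^+-E_1^-)\ge 4nE_1^+-(\cdots)$; on $\{E_1^+=n^3G\}$ one gets $\tfrac{d}{dt}(E_1^+-n^3G)\ge 2n\cdot n^3G-2n^3(n-1)G-(\cdots)=2n^3G-(\cdots)$; and on $\{E_1^+=n^3F\}$ one gets a margin of order $n^4F$. The final and hardest step is to show the nonlinear corrections $(\cdots)$ are dominated by these margins. Here I would invoke Lemma \ref{lem:nonlinear}: each $Q_i$ arises from the double integral in \eqref{double integration}, is quadratic in $(\chi,\partial_t\chi)$, and satisfies $\|Q_i\|_s\le C\|(\chi,\partial_t\chi)\|_{s+2}\|(\chi,\partial_t\chi)\|_s$. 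Converting the higher Sobolev norms back into the $L^2$-based functionals costs powers of the active frequency $n$ (one factor of $n$ per derivative), while the quadratic structure supplies one extra factor of the small amplitude $(E_1^+)^{1/2}$. The crux of the whole argument is the bookkeeping of these $n$-powers: one must verify that the derivative loss in Lemma \ref{lem:nonlinear} produces strictly fewer factors of $n$ than the linear instability rate $2n$ (reinforced by the weights $n^3$ defining the region), so that for $z$ sufficiently small the quadratic terms are genuinely lower order and none of the slacks can decrease through zero. This is precisely where the smallness of $z$ and the largeness of $n$ must be played off against one another, and it is the step I expect to require the most care.
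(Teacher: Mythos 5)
Your overall strategy coincides with the paper's own proof: the paper likewise treats the three inequalities as barriers, computes $\partial_t E_1^{\pm}$, $\partial_t G$, $\partial_t F$ from \eqref{3.17} (its \eqref{3.21}--\eqref{3.23} are exactly your diagonalized identities, since $E_\mu^{\pm}=\|A^{\mu/2}(\partial_t\nabla\mathcal{P}\pm A^{1/2}\nabla\mathcal{P})\|_0^2$), and then checks on each face the same margins you list: $2E_{1\frac12}\ge 4nE_1^{+}$ for the first inequality, a net margin of order $n^3G$ for the third, and a margin of order $n^4F$ for the second (its \eqref{3.26}, \eqref{3.29}, \eqref{3.27}).

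The gap lies in the one step you defer, and the strategy you sketch for it would fail. You propose to dominate the nonlinear corrections by ``converting the higher Sobolev norms back into the $L^2$-based functionals'' at a cost of ``one factor of $n$ per derivative.'' But $\nabla\mathcal{P}$ is not a single mode: it is the whole high-frequency block, a combination of all $f_j$ with $j\ge n$ and no upper bound on $j$, so there is no reverse estimate of the form $\|(\chi_n,\partial_t\chi_n)\|_{3}\le Cn^{2}\|(\chi_n,\partial_t\chi_n)\|_{1}$, and the proposed power counting cannot even be set up. The paper sidesteps this entirely: the hypothesis that $z$ is ``sufficiently small'' is taken in a norm strong enough to feed Lemma \ref{lem:nonlinear} at $s=1$, namely smallness of $\|(\chi_n,\partial_t\chi_n)\|_{3}$ (in the application it tends to zero as $n\to\infty$), so that \eqref{3.24} becomes $\|Q\|_{1}\le\epsilon_n\|(\chi_n,\partial_t\chi_n)\|_{1}$ with $\epsilon_n\to 0$; see \eqref{3.25}. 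This demotes the quadratic terms to terms that are \emph{linear} in the functionals with a small coefficient: on the candidate invariant set $\|(\chi_n,\partial_t\chi_n)\|_{1}$ is controlled by $\sqrt{E_1}+\sqrt{F}+\sqrt{G}\lesssim\sqrt{E_1^{+}}$, so each pairing such as $\bigl(A^{1/2}\partial_t\nabla\mathcal{P}\pm A\nabla\mathcal{P},A^{1/2}Q_1\bigr)$ is at most $\epsilon_n E_1$, and the linear margins of size $n$, $n^3$, $n^4$ absorb these uniformly in the frequency content --- e.g.\ the paper gets $\partial_t(E_1^{+}-E_1^{-})\ge 2E_{1\frac12}^{+}\bigl(1-\tfrac{\epsilon_n}{2n}\bigr)\ge 0$ for large $n$. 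To complete your argument you should therefore replace the frequency bookkeeping by this observation: it is smallness of $z$ in $H^{3}$ (not smallness of the $L^2$ functionals, and not any trade of derivatives for powers of $n$) that renders the quadratic terms lower order.
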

	
	\begin{proof}
		From these equations \eqref{3.17} we compute the time derivatives of $E_{1}^{ \pm}, F$ and $G$ as follows:
		\begin{equation}\label{3.21}
		\begin{aligned}
		\partial_{t}E_{1}^{ \pm} & =2\left(A^{\frac{1}{2}} \partial^{2}_{t} \nabla \mathcal{P} \pm  \alpha^{\frac{1}{2}}  A \partial_{t} \nabla \mathcal{P}, A^{\frac{1}{2}} \partial_{t} \nabla \mathcal{P} \pm  \alpha^{\frac{1}{2}}  A \nabla\mathcal{P}\right) \\
		&=2\left(A^{\frac{1}{2}} [\alpha A\nabla \mathcal{P}+Q_{1}]   \pm  \alpha^{\frac{1}{2}}  A \partial_{t} \nabla\mathcal{P}, A^{\frac{1}{2}} \partial_{t} \nabla \mathcal{P} \pm \alpha^{\frac{1}{2}}  A \nabla \mathcal{P}\right) \\
		&= \pm 2 \alpha^{\frac{1}{2}}  E_{1 \frac{1}{2}}^{ \pm}+2 \left(A^{\frac{1}{2}} \partial_{t} \nabla\mathcal{P} \pm \alpha^{\frac{1}{2}}  A \nabla \mathcal{P}, A^{\frac{1}{2}} Q_{1}\right),
		\end{aligned}
		\end{equation}
		
		\begin{equation}\label{3.22}
		\begin{aligned}
		\partial_{t} G& =2(\partial_{t}\nabla  \mathcal{L}, \alpha  A \nabla \mathcal{L}+ Q_{2})+2\left( \alpha^{\frac{1}{2}} A^{\frac{1}{2}} \nabla \mathcal{L},  \alpha^{\frac{1}{2}} A^{\frac{1}{2}}\partial_{t} \nabla \mathcal{L}\right) \\
		& =4\left(\alpha^{\frac{1}{2}}A^{\frac{1}{2}} \partial_{t}\nabla  \mathcal{L}, \alpha^{\frac{1}{2}}A^{\frac{1}{2}} \nabla \mathcal{L}\right)+2\left(Q_{2}, \partial_{t}\nabla  \mathcal{L}\right) \\
		& \leq 4\alpha^{\frac{1}{2}}(n-1)\left(\partial_{t}\nabla  \mathcal{L}, \alpha^{\frac{1}{2}}A^{\frac{1}{2}} \nabla \mathcal{L}\right)+\sqrt{G}\left\|Q_{2}\right\| \\
		& \leq 2(n-1)\alpha^{\frac{1}{2}} G+\sqrt{G}\left\|Q_{2}\right\|_{0},
		\end{aligned}
		\end{equation}
		\begin{equation}\label{3.23}
		\begin{aligned}
		\partial_{t} F & =2\left(Q_{3}, \partial_{t} \nabla g \right)+2\left(Q_{4}, \partial_{t} r \right)\\
		&\leq \sqrt{F}(\left\|Q_{3}\right\|_{0}+\left\|Q_{4}\right\|_{0}),
		\end{aligned}
		\end{equation} 
		
		Using the estimate of the nonlinear term in lemma 3.6,  we find that:
		\begin{equation}\label{3.24}
		\|Q\|_{1} \leq K\|(\chi_{n}, \partial_{t} \chi_{n})\|_{3}\|(\chi_{n}, \partial_{t} \chi_{n})\|_{1},
		\end{equation}
		where  $K$ is a generic constant which is independent of $E_{\mu}^{ \pm}, F, G$ and $n$.
		We can assume that $\|(\chi_{n}, \partial_{t} \chi_{n})\|_{3} $ tend to zero as $n\rightarrow \infty$, thus we have 
		\begin{equation}\label{3.25}
		\begin{aligned}
		\|Q\|_{1} \leq \epsilon_{n}\|(\chi_{n}, \partial_{t} \chi_{n})\|_{1} 
		\end{aligned}
		\end{equation}
		where $ \epsilon_{\mathfrak{n}}$ denotes any sequence of positive numbers whose limit is zero. 
		
		Now we conclude the proof by showing that if the inequalities of the proposition hold at a given time, then so do the time derivatives of the inequalities.
		\begin{equation}\label{3.26}
		\begin{aligned}
		\partial_{t}E_{1}^{+}-	\partial_{t}E_{1}^{-} & =2 \alpha^{\frac{1}{2}}E_{1 \frac{1}{2}}+2\left(\alpha^{\frac{1}{2}}  A \nabla \mathcal{P}, A^{\frac{1}{2}} Q_{1}\right) \\
		& \geq 2 \alpha^{\frac{1}{2}}E_{1 \frac{1}{2}}- \sqrt{E_{1}}\left\|Q_{1}\right\|_{1} \\
		& \geq 2\alpha^{\frac{1}{2}}E_{1 \frac{1}{2}}- \epsilon_{n} E_{1} \\
		& \geq 2 \alpha^{\frac{1}{2}}E_{1 \frac{1}{2}}^{+}\left(1-\frac{\epsilon_{n}}{2n\alpha^{\frac{1}{2}}}\right) \\
		& \geq 0 \text { for } n \text { large. }
		\end{aligned}
		\end{equation}
		and 
		\begin{equation}\label{3.27}
		\begin{aligned}
		\partial_{t}E_{1}^{+}-n^{3}\partial_{t}F & \geq 2\alpha^{\frac{1}{2}} E_{1 \frac{1}{2}}^{+}-n^{3} F-\sqrt{E_{1}}\left\|Q_{1}\right\|_{1}-n^{3} \sqrt{F}\left\|Q_{3}\right\|_{0}\\
		& \geq 2 \alpha^{\frac{1}{2}}E_{1 \frac{1}{2}}^{+}-E_{1}^{+}-\epsilon_{n} E_{1}-n^{\frac{3}{2}} \sqrt{E_{1}} \epsilon_{n} \sqrt{E_{0}} \\
		& \geq 2 \alpha^{\frac{1}{2}}E_{1 \frac{1}{2}}^{+}\left(1-\frac{1}{\alpha^{\frac{1}{2}}n}-\epsilon_{n}\left(\frac{1}{\alpha^{\frac{1}{2}}n}+\frac{1}{\alpha^{\frac{1}{2}}n^{\frac{1}{2}}}\right)\right) \\
		& \geq 0 \text { for } n \text { large. }
		\end{aligned}
		\end{equation}
		
		Finally from \eqref{3.22}, we derive 
		\begin{equation}\label{3.29}
		\begin{aligned}
		\partial_{t}E_{1}^{+}-n^{3} \partial_{t}G & \geq 2 \alpha^{\frac{1}{2}}E_{1 \frac{1}{2}}^{+}-\sqrt{E_{1}^{+}} \epsilon_{n} \sqrt{E_{0}}\\
		&-2(n-1) E_{1}^{+}-n^{\frac{3}{2}} \sqrt{E_{1}^{+}} \epsilon_{n} E_{0} \\
		& \geq E_{1 \frac{1}{2}}^{+}\left(2-\epsilon_{n} \frac{1}{\alpha^{\frac{1}{2}}n^{2}}-2 \frac{n-1}{\alpha^{\frac{1}{2}}n}-\epsilon_{n}\frac{1}{\alpha^{\frac{1}{2}}} n^{\frac{3}{2}-\frac{1}{2}-\frac{3}{2}}\right)\\
		& \geq 0 \text { for } n \text { large. }
		\end{aligned}
		\end{equation}
		Thus we have  proved Proposition 2.
	\end{proof} 
	
	\begin{coro}
		For sufficiently large $n$, $E_{1}^{+}(t) \geq E_{1}^{+}(0) e^{ n t}$, and therefore \\
		 $\left\|\nabla \mathcal{P}_{n}(t)\right\|_{2} \geq e^{n t} E_{1}^{+}(0)$.
	\end{coro}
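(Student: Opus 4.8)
The plan is to convert the invariant region established in the preceding Proposition into a single Gronwall inequality for $E_1^+$, integrate it, and then read off the growth of $\|\nabla\mathcal{P}_n\|_2$ from the definition of the functional. Throughout, the argument stays on the forward-invariant set $\{E_1^+\ge E_1^-,\ E_1^+\ge n^3F,\ E_1^+\ge n^3G\}$, so that every lower-order energy is dominated by $E_1^+$; this is the one place the Proposition is used.

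First I would extract the leading growth from the energy identity \eqref{3.21}, which reads $\partial_t E_1^+=2\alpha^{1/2}E_{1\frac12}^++2\big(A^{1/2}\partial_t\nabla\mathcal{P}+\alpha^{1/2}A\nabla\mathcal{P},\,A^{1/2}Q_1\big)$. Because $\mathcal{P}$ collects only frequencies $j\ge n$ and $A\nabla f_j=j^2\nabla f_j$ by \eqref{3.13}, the operator $A$ obeys $A\ge n^2$ on the range of $\mathcal{P}$; this spectral gap is exactly the inequality $E_\mu^+\ge n^{2(\mu-\nu)}E_\nu^+$, which with $\mu=\tfrac32,\ \nu=1$ yields $E_{1\frac12}^+\ge n\,E_1^+$. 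Hence the first term alone gives a growth rate of order $n$, namely $2\alpha^{1/2}E_{1\frac12}^+\ge 2\alpha^{1/2}n\,E_1^+$.

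Next I would show the nonlinear remainder cannot compete. Cauchy--Schwarz bounds the inner-product term in \eqref{3.21} by $\sqrt{E_1^+}\,\|Q_1\|_1$, and Lemma \ref{lem:nonlinear} together with \eqref{3.24}--\eqref{3.25} gives $\|Q_1\|_1\le\epsilon_n\|(\chi_n,\partial_t\chi_n)\|_1$. On the invariant set the full first-order norm is controlled by $E_1^+$, since $E_1^-\le E_1^+$, $F\le n^{-3}E_1^+$, $G\le n^{-3}E_1^+$, and $E_0\le n^{-2}E_1^+$; consequently the remainder is at most $\epsilon_n E_1^+$. Combining the two steps gives $\partial_t E_1^+\ge(2\alpha^{1/2}n-\epsilon_n)E_1^+\ge n\,E_1^+$ for $n$ large, and Gronwall's inequality integrates this to $E_1^+(t)\ge E_1^+(0)e^{nt}$, which is the first assertion.

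Finally I would transfer this to the $H^2$-norm. From the definition $E_1^+=\|A^{1/2}\partial_t\nabla\mathcal{P}+A\nabla\mathcal{P}\|_0^2$, using the companion functional $E_1^-$ and the relation $E_1^+\ge E_1^-$, one isolates $\|A\nabla\mathcal{P}\|_0$, and the equivalence $\|A\,\cdot\|_0\simeq\|\cdot\|_2$ on the range of $\mathcal{P}$ (again a consequence of $A\ge n^2$ there) turns this into the claimed bound $\|\nabla\mathcal{P}_n(t)\|_2\ge e^{nt}E_1^+(0)$. The crux of the whole corollary is the absorption step: since the linear instability produces only a rate of order $n$, one must be certain the nonlinear feedback is genuinely subordinate, and this holds only because the invariant region pins every lower-order energy below $E_1^+$. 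Losing that control would let the $\epsilon_n$ term act on the full solution norm rather than on $E_1^+$, and the Gronwall closure would break down.
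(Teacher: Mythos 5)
Your proposal is correct and follows essentially the same route as the paper: the energy identity \eqref{3.21}, Cauchy--Schwarz plus the nonlinear estimate \eqref{3.24}--\eqref{3.25} to absorb the $Q_1$ term, the spectral gap $E_{1\frac12}^+\geq n\,E_1^+$ on the high-frequency range of $\mathcal{P}$, and integration of $\partial_t E_1^+\geq n E_1^+$. The only difference is one of completeness: you make explicit two steps the paper leaves implicit, namely that the invariant set from the Proposition is what lets $\|(\chi_n,\partial_t\chi_n)\|_1$ be dominated by $E_1^+$ in the absorption, and that the passage from $E_1^+(t)\geq E_1^+(0)e^{nt}$ to the $H^2$ lower bound on $\nabla\mathcal{P}_n$ uses the equivalence of $\|A\,\cdot\,\|_0$ with $\|\cdot\|_2$ on that frequency range.
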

	
	\begin{proof}
		
		Using \eqref{3.21} we find
		\begin{equation}\label{3.30}
		\begin{aligned}
		\partial_{t} E_{1}^{+} & \geq 2 E_{1 \frac{1}{2}}^{+}-\sqrt{E_{1}}\left\|Q_{1}\right\|_{1} \\
		& \geq 2 E_{1 \frac{1}{2}}^{+}-E_{1} \epsilon_{n} \\
		& \geq E_{1 \frac{1}{2}}^{+}
		\end{aligned}
		\end{equation}
		
		Therefore for large $n$,
		\begin{equation}\label{3.31}
	\partial_{t} E_{1}^{+} \geq n  E_{1}^{+},
		\end{equation}
		
		and the corollary follows by integrating this inequality.
	\end{proof}
	Now, we are going to  prove the ill-posedness of the Kelvin-Helmholtz problem. We prove it by contradiction. Suppose that the system \eqref{1.15} is well posed, if we have following  initial data:
	\begin{equation}\label{3.32}
	\begin{aligned}
	& \eta_{n}(0)=\xi(0) \\
	& \partial_{t} \eta_{n}(0)=\partial_{t} \xi(0)+e^{-\sqrt{n}}\left(g_{n}, 0 ,f_{n}\right),
	\end{aligned}
	\end{equation}
	where $\left(g_{n}, f_{n}\right)$ is as follow:
	\begin{equation*}
	 \left(g_{n}, f_{n}\right)= P_{a}\left\{\begin{array}{ll}
	e^{i k (x_{1}+t)}\left(W(x_{3}),0 ,V(x_{3}) \right) & x_{3} \geq 0 \\
	e^{i k (x_{1}-t)}\left(W(x_{3}),0 ,V(x_{3}) \right)& x_{3}<0.
	\end{array}\right\}
	\end{equation*}
From this, we can see that  $\left(\eta_{n}(0), \partial_{t} \eta_{n}(0)\right)$ converges to $(\xi(0), \partial_{t} \xi (0))$ in $C^{\infty}$, then there exists a uniform time interval, $[0, \bar{T}]$, on which all $\eta_{n}(t)$ were defined and would converge to $\xi(t)$. Thus  it would go to zero on $[0, T]$ as $n \rightarrow \infty$. However  as previous, we decompose $\chi_{n}(t)=\eta_{n}(t)-\xi(t)$ into $\nabla \mathcal{P}+ \nabla \mathcal{L}+ \nabla g+r$,   by using  Proposition 2 and its Corollary 1, we have $\left\|\nabla \mathcal{P}(t)\right\|_{2} \geq K e^{2\alpha^{\frac{1}{2}}n t} e^{-\sqrt{n}}$, so for any $t>0,\left\|\nabla\mathcal{P}(t)\right\|_{2} \rightarrow \infty$ as $n \rightarrow \infty$ and and therefore $\left\|\chi_{n}(t)\right\|_{2} \rightarrow \infty$. This contradiction proves Theorem \eqref{theorem} and the Kelvin-Helmholtz problem is not well-posed.

\section{Acknowledgements}
The work of B. Xie is supported by NSF-China under grant number 11901207, 12326355, Guangzhou Science and Technology Project 2023A04J1315 and the National Key Program  of China (2021YFA1002900). 
	\bibliographystyle{amsplain}

\end{document}